\ifdef{\crop}{%
\usepackage[includeheadfoot,twoside=False,paperwidth=448pt,paperheight=587pt,rmargin=15pt,lmargin=15pt,tmargin=15pt,bmargin=15pt]{geometry}%
}{%
\setlength{\topmargin}{22mm}
\addtolength{\topmargin}{-1in}
\setlength{\oddsidemargin}{27mm}
\addtolength{\oddsidemargin}{-1in}
\setlength{\evensidemargin}{27mm}
\addtolength{\evensidemargin}{-1in}
\setlength{\textwidth}{156mm}
\setlength{\textheight}{240mm}
}%
\theoremstyle{plain}
\newtheorem{thm}{Theorem}[section]
\newtheorem*{thm*}{Theorem}
\newaliascnt{prop}{thm}
\newaliascnt{cor}{thm}
\newaliascnt{lem}{thm}
\newaliascnt{claim}{thm}
\newaliascnt{defn}{thm}
\newaliascnt{ques}{thm}
\newaliascnt{conj}{thm}
\newaliascnt{fact}{thm}
\newaliascnt{rem}{thm}
\newaliascnt{ex}{thm}
\newtheorem{prop}[prop]{Proposition}
\newtheorem{cor}[cor]{Corollary}
\newtheorem{lem}[lem]{Lemma}
\newtheorem*{prop*}{Proposition}
\newtheorem*{cor*}{Corollary}
\newtheorem*{lem*}{Lemma}
\newtheorem*{claim*}{Claim}
\theoremstyle{definition}
\newtheorem{defn}[defn]{Definition}
\newtheorem{ques}[ques]{Question}
\newtheorem{conj}[conj]{Conjecture}
\newtheorem*{defn*}{Definition}
\newtheorem*{ques*}{Question}
\newtheorem*{conj*}{Conjecture}
\newtheorem*{prob*}{Problem}
\newtheorem{rem}[rem]{Remark}
\newtheorem{ex}[ex]{Example}
\newtheorem*{fact*}{Fact}
\newtheorem*{rem*}{Remark}
\newtheorem*{ex*}{Example}
\def\textsectionN~{\textsection{}}
\renewcommand\phi{\varphi}
\renewcommand\epsilon{\varepsilon}
\renewcommand\leq{\leqslant}
\renewcommand\geq{\geqslant}
\newcommand{\set}{  \@ifstar{\@setstar}{\@set}}\newcommand{\@setstar}[2]{\{\, #1 \mid #2 \,\}}
\newcommand{\@set}[1]{\{ #1 \}}
\newcommand{\trans}[1][1]{\raisebox{#1ex}{\scriptsize\kern0.1em$t$\kern-0.1em}}
\newcommand{\A}{\mathbb{A}}
\DeclareMathOperator{\Proj}{Proj}
\DeclareMathOperator{\Spec}{Spec}
\DeclareMathOperator{\Aut}{Aut}
\DeclareMathOperator{\vol}{vol}
\DeclareMathOperator{\Conv}{Conv}
\DeclareMathOperator{\ord}{ord}
\DeclareMathOperator{\DF}{DF}
\DeclareMathOperator{\Ding}{Ding}
\def\N{\mathbb{N}}
\def\Z{\mathbb{Z}}
\def\Q{\mathbb{Q}}
\def\R{\mathbb{R}}
\def\C{\mathbb{C}}
\def\A{\mathbb{A}}
\def\r+{\mathbb{R}_{\geq 0}}
\def\ep{\varepsilon}
\def\r+{{\R}_{\geq 0}}
\def\q+{{\Q}_{\geq 0}}
\def\P{\mathbb{P}}
\def\*c{\C^{\times}}
\def\tc{(\calx,\call)}
\def\tcL{(\calx_{\mathrm{Loe}},\call_{\mathrm{Loe}})}
\def\tcS{(\calx_{\mathrm{Soc}},\call_{\mathrm{Soc}})}
\def\lf{\calf^L_{\bullet} R}
\def\lfi{\calf^L_i R}
\def\lfik{\calf^L_i R_d}
\def\sf{\calg^S_{\bullet} R}
\def\sfi{\calg^S_i R}
\def\sfik{\calg^S_i R_d}
\def\fR{\calf_{\bullet} R}
\def\fiR{\calf_i R}
\def\mu{\mathfrak{u}}
\def\A{\mathbb {A}}
\def\C{\mathbb {C}}
\def\G{\mathbb {G}}
\def\N{\mathbb {N}}
\def\Q{\mathbb {Q}}
\def\R{\mathbb {R}}
\def\Z{\mathbb {Z}}
\newcommand{\calf}{\mathcal {F}}
\newcommand{\calg}{\mathcal {G}}
\newcommand{\call}{\mathcal {L}}
\newcommand{\calo}{\mathcal {O}}
\newcommand{\calx}{\mathcal {X}}
\title[Examples on Loewy filtrations and K-stability of Fano varieties]{Examples on Loewy filtrations and K-stability of Fano varieties with non-reductive
automorphism groups}
\author[A.~Ito]{Atsushi~Ito}
\address{Graduate School of Mathematics,
Nagoya University,
Nagoya, Japan}
\email{atsushi.ito@math.nagoya-u.ac.jp}
\subjclass[2010]{14J45,14L30,32Q26}
\keywords{Loewy filtration, K-stability}
\begin{document}

\maketitle

\begin{abstract}
It is known that the automorphism group of a K-polystable Fano manifold is
reductive. Codogni and Dervan construct a canonical filtration of the section ring, called
Loewy filtration, and conjecture that the Loewy filtration destabilizes any Fano variety with non-reductive automorphism group. In this note, we give a counterexample to their conjecture.
\end{abstract}

\section{Introduction}

For a Fano manifold $X$ over $\C$,
it is known that $X$ admits K\"ahler-Einstein metrics if and only if
$X$ is K-polystable \cite{Ti1,Do2,CT,St,Be,CDS1,CDS2,CDS3,Ti2}.
The K-polystability of $X$ is defined by using the Donaldson-Futaki invariant $\DF \tc$ of a test configuration $\tc$ of $X$.
Roughly,
$X$ is called \emph{K-polystable} if $\DF \tc \geq 0$ for any test configuration of $X$,
and equality holds only for a special type of test configurations, called \emph{of product type}.
On the other hand, Matsushima \cite{Ma} shows that if
$X$ admits K\"ahler-Einstein metrics then the automorphism group $\Aut(X)$ of $X$ is reductive.
Hence if $\Aut(X)$ is not reductive,
$X$ is not K-polystable.
Then
there exists a test configuration $\tc$ of $X$ which \emph{destabilizes} $X$,
i.e.\ $\DF \tc < 0$, or $\DF \tc =0$ and $\tc$ is not of product type.

By this observation,
Codogni and Dervan \cite{CD1} consider the following question:

\begin{ques}
If $\Aut(X)$ is not reductive,
can we find a (canonical) destabilizing test configuration $\tc$ of $X$ related to $\Aut(X)$?
\end{ques}

A test configuration $\tc$ can be interpreted as a suitable finitely generated decreasing filtrations 
$ \fR = \{\fiR\}_{ i \in \Z} $ of the section ring $R=\bigoplus_{d \geq 0} H^0(X,-dK_X)$
by
\[
(\calx,\call) =\left(\Proj_{\A^1} \bigoplus_i (\calf_i R ) t^{-i} , \calo(1) \right) \rightarrow \A^1=\Spec \C[t],
\]
where $ \fR  $ is called \emph{finitely generated} if  $\bigoplus_{i} (\fiR) t^{-i}$ is a finitely generated $\C[t]$-algebra.

 \vspace{2mm}
Using the action of $\Aut(X)$,
Codogni and Dervan construct a canonical filtration of $R$, called the \emph{Loewy filtration} of $X$.
Note that we do not know whether or not the Loewy filtration is finitely generated in general \cite{CD1,CD2}.

The following is a special case of \cite[Conjecture B]{CD1},
i.e.\ the case when
the Loewy filtration is finitely generated:

\begin{conj}\label{conj_main}
Let $X$ be a Fano manifold with non-reductive automorphism group.
Assume that the Loewy filtration
of $X$ is finitely generated.
Then the induced test configuration $\tcL$ destabilizes $X$.
\end{conj}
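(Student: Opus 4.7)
Given that the abstract announces a counterexample, the sensible plan is to search for an explicit Fano variety $X$ refuting \autoref{conj_main} rather than to attempt an affirmative proof. Concretely, the goal is to exhibit $X$ with non-reductive automorphism group whose Loewy filtration is finitely generated and whose induced test configuration $\tcL$ fails to destabilize $X$: either $\tcL$ is of product type with $\DF \tcL = 0$, or $\DF \tcL > 0$.

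The first step is to choose a tractable candidate. The easiest sources of Fano varieties with non-reductive automorphism groups are projective bundles, blowups of projective space at non-generic configurations, and singular toric or spherical Fano varieties. A quasi-homogeneous example is preferable, because then the section ring $R = \bigoplus_d H^0(X, -dK_X)$ decomposes transparently under $\Aut(X)^0$, and the unipotent radical $U \subset \Aut(X)^0$ can be described combinatorially. I would begin with low-dimensional candidates, such as singular toric Fanos whose fans force a unipotent symmetry, or weighted projective space blowups engineered to have extra unipotent automorphisms.

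The second step is to compute the Loewy filtration explicitly. Since it is defined graded-piece-by-graded-piece via iterated socles (or radicals) for the $U$-action on each $R_d$, I would use the $T$-weight decomposition of $R_d$, for a maximal torus $T \subset \Aut(X)^0$ normalising $U$, to track how $U$ shifts weights, so that the Loewy layers appear as unions of weight strata. In a well-chosen example this should reduce to a finite linear-algebra computation, which simultaneously proves finite generation of the filtration and so puts us inside the hypothesis of the conjecture.

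The third step is to compute $\DF \tcL$ and decide whether $\tcL$ is of product type. The cleanest counterexample arises when the Loewy filtration coincides, up to a global shift, with a filtration induced by a one-parameter subgroup of $\Aut(X)$: then $\tcL$ is automatically of product type and $\DF \tcL = 0$ by general properties of product configurations. The main obstacle is step two: even for a concrete $X$, showing that the iterated socle series of each $R_d$ is compatible with a torus weight decomposition \emph{uniformly in $d$} requires delicate representation-theoretic bookkeeping, and it is this uniform compatibility across all $d$ on which the counterexample ultimately depends.
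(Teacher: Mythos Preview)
Your overall strategy---look at toric Fano varieties, exploit the torus weight decomposition to compute the Loewy layers uniformly in $d$, and then evaluate $\DF\tcL$---matches the paper's approach closely, and the paper indeed begins with a singular toric surface before producing the smooth threefold of \autoref{thm_main}.

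However, your preferred endgame contains an error and points in the wrong direction. A product test configuration does \emph{not} have $\DF=0$ ``by general properties'': its Donaldson--Futaki invariant equals the classical Futaki invariant of the corresponding one-parameter subgroup, which has no reason to vanish on a non-K-polystable variety. More importantly, the Loewy test configuration in the paper's examples is \emph{not} of product type. The paper shows (\autoref{prop_filtration}) that when $P$ has a unique unipotent root $m=(0,-1)$ and is written as $P=\{(u',t)\in F'\times\R \mid -1\le t\le h(u')\}$, the Loewy filtration coincides with the toric filtration attached to the piecewise linear concave function $f(u',t)=h(u')-t$. This is linear only when the polytope has a single top facet, which is incompatible with $m$ being unipotent. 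So the Loewy test configuration is genuinely non-product, and the counterexample is obtained by computing $\DF\tcL>0$ via the integral formula of \autoref{thm_DFtoric}, not by arranging $\DF\tcL=0$.

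The uniform-in-$d$ compatibility you flag as the main obstacle is handled in the paper by \autoref{lem_isom_Vd} and \autoref{lem_loewy_socle}: each fibre $R_d^{u'}$ over a lattice point $u'\in dF'$ is isomorphic as a $U$-module to the standard $(\lfloor dh(u'/d)\rfloor+d+1)$-dimensional $\G_a$-module of \autoref{ex_filtration_vecter_sp}, from which the Loewy layers are read off directly. This is exactly the ``$T$-weight strata'' picture you anticipate, but the conclusion it leads to is $\DF\tcL>0$ rather than a product configuration.
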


We note that they state the conjecture \cite[Conjecture B]{CD1} not only for Fano manifolds but also for polarized varieties.

The purpose of this note is to give a counterexample to \autoref{conj_main},
and hence to \cite[Conjecture B]{CD1} as follows:

\begin{thm}\label{thm_main}
There exists a smooth toric Fano $3$-fold $X$ with non-reductive automorphism group
such that the Loewy filtration is finitely generated and the Donaldson-Futaki invariant $\DF \tcL $ is positive.
In particular,
$\tcL$ does not destabilize $X$.
\end{thm}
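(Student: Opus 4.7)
My plan is to build the counterexample completely explicitly via the toric dictionary: for a toric Fano variety, the section ring $R$, the action of $\Aut(X)$, the Loewy filtration $\lf$, and the Donaldson--Futaki invariant of the resulting test configuration are all encoded combinatorially by the anticanonical lattice polytope $P$ together with the Demazure roots of the fan $\Sigma$ of $X$.

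First I would pick a specific smooth toric Fano $3$-fold $X$ whose set of Demazure roots is not symmetric. Recall that for a smooth toric variety the Lie algebra of $\Aut(X)$ splits as $\mathfrak{t} \oplus \bigoplus_\alpha \C\, e_\alpha$ with $\alpha$ running over Demazure roots of $\Sigma$, so $\Aut(X)$ is non-reductive exactly when some $\alpha$ has $-\alpha$ not a Demazure root. Since smooth toric Fano $3$-folds are classified (there are $18$ of them, after Batyrev and Watanabe--Watanabe), I would enumerate the candidates with this property and try them in order of combinatorial complexity.

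Next I would compute $\lf$ directly from the toric data. Writing $R_d = \bigoplus_{m \in dP \cap M} \C\, \chi^m$, each root vector $e_\alpha$ acts as a weight-shifting operator $\chi^m \mapsto c_{m,\alpha}\, \chi^{m+\alpha}$ (understood as zero whenever $m + \alpha \notin dP$), so $R$ becomes a $U(\mathfrak{n})$-module for the nilpotent Lie algebra $\mathfrak{n} = \bigoplus_\alpha \C\, e_\alpha$. The socle series governing the Loewy filtration is then determined by how many iterated root translates stay inside $dP$, a polyhedral condition on lattice points. Finite generation of the Rees algebra $\bigoplus_i (\lfi)\, t^{-i}$ should follow from this polyhedral/polytopal description, so that the construction of \cite{CD1} yields a genuine toric test configuration $\tcL$ to which \autoref{conj_main} applies.

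Finally I would evaluate $\DF \tcL$ using the standard integral formula for toric test configurations: $\lf$ corresponds to a concave piecewise-linear function $f$ on $P$, and $\DF \tcL$ becomes an explicit integral in $f$ and $P$, with the Futaki-character correction vanishing because the Loewy filtration is $T$-equivariant. The main obstacle, and the point on which the whole theorem turns, is showing positivity of this integral for \emph{some} non-reductive $X$: there is no structural reason it should hold, so this step is really a search. I would run the polytope computation for successive candidates from the classification, beginning with those whose unipotent radical is smallest—for instance a single root $\alpha$ with $-\alpha$ absent and $P$ relatively thin in the $\alpha$-direction—until a candidate with $\DF \tcL > 0$ appears; this $X$ then gives the theorem.
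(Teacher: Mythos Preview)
Your plan is essentially the paper's proof: restrict to a smooth toric Fano $3$-fold with a single unipotent Demazure root $\alpha$, identify the Loewy filtration with the toric filtration attached to a piecewise-linear concave function on $P$, and evaluate $\DF\tcL$ by the Donaldson integral formula; the paper carries this out for the blow-up of $\Sigma_1\times\P^1$ along $C\times\{p\}$ and obtains $\DF\tcL=21/160>0$. One caution: your heuristic that the filtration index is ``how many iterated root translates stay inside $dP$'' actually yields the \emph{Socle} index (distance to the facet containing $\alpha$), whereas the Loewy index is the distance to the \emph{opposite} facets (the graph of $h$ in the paper's notation); since the paper computes $\DF\tcS=-21/160$ on the same $X$, getting this direction right is precisely what determines the sign.
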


In a preliminary version \cite{CD0} of \cite{CD1},
they also mention \emph{Socle filtrations},
which are ``dual'' of Loewy filtrations.
We also study Socle filtrations.

\vspace{2mm}
This note is organized as follows. 
In Section \ref{sec_Kstability}, we recall K-stability and Loewy filtrations.
In Section \ref{sec_toric}, we explain some known results about toric varieties.
In Section \ref{sec_ex}, we give a counterexample to \autoref{conj_main}.
In Appendix, we show a property of Socle filtrations.
Throughout this note, we work over $\C$.
We denote by $\N$ the set of all non-negative integers.

\subsection*{Acknowledgments}
The author would like to express his gratitude to
Professor Tomoyuki Hisamoto for suggesting me to consider Loewy filtrations and giving me useful comments and advice.
He is also grateful to Professor Naoto Yotsutani for valuable discussion and comments.
He would like to thank Professor Giulio Codogni for explaining stabilities and Loewy filtrations to him.
He is indebted to Professors Kento Fujita, Yuji Odaka, and Shintarou Yanagida for answering his questions. 
The author was supported by Grant-in-Aid for Scientific Research 17K14162.

\section{K-stability, test configurations, and filtrations}\label{sec_Kstability}

Throughout this section, $X$ is a $\Q$-Fano variety,
that is,
$X$ is a normal projective variety with at most klt singularities such that the anti-canonical divisor $-K_X$ is $\Q$-Cartier and ample.

\subsection{K-stability}

\begin{defn}\label{def_tc}
A \emph{test configuration} $(\calx,\call)$ of $X$ consists of the following data:
\begin{itemize}
\item a variety $\calx$ with a projective morphism $\pi:\calx \rightarrow \A^1$,
\item a $\Q$-line bundle $\call$ on $\calx$ which is ample over $\A^1$,
\item a $\G_m$-action on $(\calx,\call)$ such that $\pi$ is $\G_m$-equivariant and $(\calx \setminus X_0,\call|_{\calx \setminus X_0})$
is $\G_m$-equivariantly isomorphic to $(X \times (\A^1 \setminus \{0\}) , p_1^* (-K_X))$,
where $\G_m $ acts on $\A^1$ multiplicatively and $X_0$ is the fiber over $0 \in \A^1$.
\end{itemize}
\end{defn}

For a test configuration $(\calx,\call)$ of $X$,
we can define a rational number DF$(\calx,\call)$,
called the \emph{Donaldson-Futaki invariant} of $(\calx,\call)$.
See \cite{Do} for the definition of DF$(\calx,\call)$.

\begin{defn}\label{def_stability}
A $\Q$-Fano variety $X$ is called 
\begin{enumerate}
\item \emph{K-semistable} if for any test configuration $(\calx,\call)$ of $X$,
we have DF$(\calx,\call) \geq 0$.
\item \emph{K-polystable} if $X$ is K-semistable and, 
if DF$(\calx,\call) = 0$ for a test configuration $(\calx,\call)$ of $X$,
then $\calx $ is isomorphic to a test configuration of product type outside a codimension two subset, i.e.\
$\calx$ is isomorphic to $X \times \A^1$ over $\A^1$ outside a codimension two subset.
\end{enumerate}
\end{defn}

Let 
\[
R=\bigoplus_{d \geq 0} R_d=\bigoplus_{d \geq 0} H^0(X,-d K_X) 
\]
be the section ring of $X$.
In this note,
a \emph{decreasing filtration} $\calf_{\bullet} R$ of $R$ is a sequence of vector subspaces 
\[
\cdots \supset \calf_i R \supset \calf_{i+1} R \supset \cdots
\]
of $R$ for $i \in \Z$ such that 
$\calf_i R = \bigoplus_{d \geq 0} (\calf_i R \cap R_d)$ holds for any $i \in \Z$
and $\bigcup_{i \in \Z} \calf_i R=R$.

A decreasing filtration $\calf_{\bullet} R $ is called
\begin{itemize}
\item \emph{multiplicative} if $\calf_i R \cdot \calf_{j} R \subset \calf_{i+j} R$ for any $i,j$.
We note that if $\calf_{\bullet} R $ is multiplicative, $\bigoplus_{i \in \Z} (\calf_i R ) t^{-i}$ has a natural $\C[t]$-algebra structure.
\item \emph{finitely generated} if it is multiplicative and the $\C[t]$-algebra $\bigoplus_{i \in \Z} (\calf_i R ) t^{-i}$ is finitely generated.
\end{itemize}

An \emph{increasing filtration} $\calg_{\bullet} R$ of $R$ is a sequence of vector subspaces 
\[
\cdots \subset \calg_i R \subset \calg_{i+1} R \subset \cdots
\]
of $R$ for $i \in \Z$ such that $\calg_i R = \bigoplus_{d \geq 0} (\calg_i R \cap R_d)$ holds for any $i \in \Z$
and $ \bigcup_{i \in \Z}  \calg_i R = R$.

An increasing filtration $\calg_{\bullet} R $ is called
\begin{itemize}
\item \emph{multiplicative} if $\calg_i R \cdot \calg_{j} R \subset \calg_{i+j} R$ for any $i,j$.
In that case, $\bigoplus_{i \in \Z} (\calg_i R ) t^{i}$ has a natural $\C[t]$-algebra structure.
\item \emph{finitely generated} if it is multiplicative and the $\C[t]$-algebra $\bigoplus_{i \in \Z} (\calg_i R ) t^{i}$ is finitely generated.
\end{itemize}

If a decreasing filtration $\calf_{\bullet} R $ is finitely generated,
we have  
\[
(\calx,\call) :=\left(\Proj_{\A^1} \bigoplus_{i \in \Z} (\calf_i R ) t^{-i} , \calo(1) \right) \rightarrow \A^1,
\]
which is a test configuration of $X$.
We call this  $(\calx,\call)$ the test configuration induced by the finitely generated filtration $\calf_{\bullet} R $.
We say that $\calf_{\bullet} R $ \emph{destabilizes} $X$ if so does the induced test configuration $(\calx,\call) $.

Similarly, if an increasing filtration $\calg_{\bullet} R $ is finitely generated,
we have the induced test configuration
\[
(\calx,\call) :=\left(\Proj_{\A^1} \bigoplus_{i \in \Z} (\calg_i R ) t^{i} , \calo(1) \right) \rightarrow \A^1.
\]

\subsection{Loewy and Socle filtrations}

\begin{defn}\label{def_loewy_socle}
Let $U$ be a unipotent algebraic group,
and $V$ be a finite dimensional $U$-module.
\begin{enumerate}
\item The \emph{Loewy filtration} $\calf^L_{\bullet} V=\{ \calf^L_i V\}_{i \in \N}$ is a decreasing filtration of $U$-modules defined by
\begin{itemize}
\item[(i)] $\calf^L_0 V=V$,
\item[(ii)] for $i > 0$, $\calf^L_i V$ is the minimal $U$-submodule of $\calf^L_{i-1} V$ such that the quotient $\calf^L_{i-1} V/ \calf^L_{i} V $ is semisimple, i.e.\ the action on $\calf^L_{i-1} V/ \calf^L_{i} V $ is trivial.
\end{itemize}
\item The \emph{Socle filtration} $\calg^S_{\bullet} V = \{ \calg^S_i V\}_{i \in \N}$ is an increasing filtration of $U$-modules defined by
\begin{itemize}
\item[(i)] $\calg^S_0 V=V^U$, the invariant part of $V$ by the action of $U$,
\item[(ii)] for $i > 0$, $\calg^S_i V / \calg^S_{i-1} V = (V / \calg^S_{i-1} V )^U$.
\end{itemize}
\end{enumerate}
\end{defn}

\begin{rem}\label{rem_def_loewy_socle}
Loewy filtrations can be defined for not necessarily unipotent algebraic groups.
However, we can reduce the general case to the unipotent case by taking the unipotent radical \cite[Lemma 2.3]{CD1}.

Since $U$ is unipotent and $V$ is finite dimensional,
$\calf^L_iV=\{0\}$ and $\calg^S_i V=V$ for $i \gg 0$.

We also note that the indexes of the Socle filtration in \autoref{def_loewy_socle} is shifted by one from those in \cite{CD0}.
More precisely, it is defined as $\calg^S_0V =\{0\}, \calg^S_1 V=V^U, \dots$ in \cite{CD0}.
\end{rem}

\begin{ex}\label{ex_filtration_vecter_sp}
Fix $N \in \N$ and set $V_N =\{ f \in \C[x] \, | \, \deg(f) \leq N \} \subset \C[x]$.
Let $U $ be the additive unipotent algebraic group $\C$,
and consider the action of $U$ on $V_N$ by $\alpha \cdot x := x+\alpha$ for $\alpha \in U=\C$. 
In this case, it holds that
\[
\calf^L_i V_N=  \{ f \in V \, | \, \deg(f) \leq N-i \} , \quad \calg^S_i V_N=  \{ f \in V \, | \, \deg(f) \leq i \} .
\]
\end{ex}

\begin{defn}\label{def_loewy_socle_Section_ring}
Let $X$ be a $\Q$-Fano variety and
$U$ be the unipotent radical of the automorphism group $\Aut(X)$ of $X$.
Then $U$ acts on $R_d =  H^0(X,-d K_X) $ for each $d\geq0$.

\begin{enumerate}
\item The \emph{Loewy filtration} $\calf^{L}_{\bullet} R$ of $X$ is a decreasing filtration of $R$
defined by
\begin{itemize}
\item $\lfi =R$ for $i < 0$,
\item  $\lfi := \bigoplus_{d \geq 0} \calf^L_i R_d$ for $i \geq 0$,
where $\calf^L_{\bullet} R_d $ is the Loewy filtration of the $U$-module $R_d$.
\end{itemize}
\item The \emph{Socle filtration} $\calg^S_{\bullet} R$ of $X$ is an increasing filtration of $R$
defined by
\begin{itemize}
\item $\sfi =\{0\}$ for $i < 0$,
\item  $\sfi := \bigoplus_{d \geq 0} \calg^S_i R_d$ for $i \geq 0$,
where $\calg^S_{\bullet} R_d $ is the  Socle filtration of the $U$-module $R_d$.
\end{itemize}
\item If the Loewy filtration $\lf$ (resp.\ Socle filtration $\sf$) is finitely generated,
we denote by $\tcL$ (resp.\ $\tcS$) the induced test configuration of $X$.
\end{enumerate}
\end{defn}

\begin{rem}\label{rem_loewy_socle_Section_ring}
We note that $ \bigcup_{i \in \Z}  \calf^L_i R  =  \bigcup_{i \in \Z}  \calg^S_i R = R$ holds by \autoref{rem_def_loewy_socle}.
It is not known whether or not the Loewy filtration of a $\Q$-Fano variety is multiplicative in general \cite{CD2}.
On the other hand, we will show that the Socle filtration is multiplicative in Appendix.
\end{rem}

\begin{ex}
Let $S \rightarrow \P^2$ be the blow-up of $\P^2=\Proj \C[X,Y,Z]$ at $[1:0:0]$ and $[0:1:0]$.
The Loewy filtration of $S$ is computed in \cite[Subsection 3.2]{CD1} as follows.

The unipotent radical of $\Aut(S)$ consists of matrixes of the form
\[
 \begin{pmatrix}
1 & 0 & \alpha \\
0 & 1 & \beta\\
0&0&1\\
\end{pmatrix}
\quad \text{ for   }  \ \alpha, \beta \in \C,
\]
which acts on $\C[X,Y,Z] $ by
\begin{align}\label{eq_action}
X \mapsto X + \alpha Z, \quad Y \mapsto Y + \beta Z, \quad Z \mapsto Z.
\end{align}

Since $-K_S= 3H -E_1-E_2$, where $H$ is the pullback of $\calo_{\P^2}(1)$ and $E_1,E_2$ are the exceptional divisors,
we have
\[
R_d=H^0(S, -d K_S) = \langle X^a Y^b Z^{3d-a-b}  \, | \, 0 \leq a,b \leq 2d, a+b \leq 3d \rangle .
\]
In \cite{CD1}, it is shown that
$\calf^L_i R_d = \langle X^a Y^b Z^{3d-a-b}  \, | \, 0 \leq a,b \leq 2d, a+b \leq 3d -i \rangle$  for $i \geq 0$,
and hence $\lf$ is finitely generated.
In this example, $\DF \tcL < 0$ holds as computed in \cite{CD1}.

\vspace{2mm}
For the Socle filtration  $\calg^S_{\bullet} R$,
we need to compute the invariant part of the action of $U$.
By \ref{eq_action}, an element in $R_d=\langle X^a Y^b Z^{3d-a-b}  \, | \, 0 \leq a,b \leq 2d, a+b \leq 3d \rangle $ is invariant if and only if it a polynomial of $Z$.
Hence we have 
\[
\calg^S_0 R_d=R_d^{U} =\langle Z^{3d} \rangle.
\]
For $\calg^S_1 R_d $, we need to consider the action on 
\[
 R_d/\calg^S_0 R_d =  \langle X^a Y^b Z^{3d-a-b}  \, | \, 0 \leq a,b \leq 2d, a+b \leq 3d \rangle / \langle Z^{3d} \rangle.
\]
Since $  (R_d/\calg^S_0 R_d)^U =  \langle XZ^{3d-1}, YZ^{3d-1},Z^{3d}  \rangle / \langle Z^{3d} \rangle$,
we have $\calg^S_1 R_d = \langle XZ^{3d-1}, YZ^{3d-1},Z^{3d}  \rangle$.
Inductively,
it holds that
\[
\calg^S_{i} R_d = \langle X^a Y^b Z^{3d-a-b}  \, | \, 0 \leq a,b \leq 2d, a+b \leq \min \{i, 3d\} \rangle  
\]

In this example, 
the Socle filtration is essentially the same as the Loewy filtration.
More precisely, $\calg^S_{i} R_d = \calf^L_{3d-i} R_d $ holds for any $i, d$ and hence $\tcS$ coincides with $\tcL$.
\end{ex}

\section{Toric varieties}\label{sec_toric}

Let $M \simeq \Z^n$ be a lattice of rank $n$, and $N$ be the dual lattice of $M$.
An $n$-dimensional lattice polytope $P \subset M_{\R}:=M \otimes \R$ is called \emph{reflexive} if $P$ contains $0\in M$ in its interior 
and the dual polytope 
\[
P^{*} :=\{ v \in N_{\R} :=N \otimes \R \, | \, \langle u,v \rangle  \geq -1 \text{ for any } u \in P\}
\]
is a lattice polytope as well.
A reflexive polytope $P \subset M_{\R}$ defines
an $n$-dimensional Gorenstein toric Fano variety $X$ 
by 
\[
(X,-K_X)=\left(\Proj \C[\Gamma_P],\calo(1) \right),
\]
where $\Gamma_P=\{ (d,u) \in \N \times M \, | \, u \in dP\}$
and $\C[\Gamma_P] = \bigoplus_{(d,u) \in \Gamma_P} \C \chi^{(d,u)}$ is the semigroup ring graded by $\N$.
In particular, it holds that 
\[
H^0(X,-dK_X) =  \bigoplus_{u \in dP \cap M} \C \chi^{(d,u)}.
\]

In the rest of this section, $P \subset M_{\R}$ is a reflexive polytope and $X$ is the corresponding Gorenstein toric Fano variety.

\subsection{Toric test configurations}

Let $f : P \rightarrow \R$ be a piecewise linear concave function with rational coefficients.
As is well known, $f$ induces a test configuration of $X$ as follows.

Consider a decreasing filtration $\calf^f_{\bullet} R$ of the section ring $R=\C[\Gamma_P] $
by
\[
\calf^f_i R = \langle \chi^{(d,u)} \, | \,(d,u) \in \Gamma_P ,  f(u/d) \geq i/d \rangle.
\]
This filtration $\calf^f_{\bullet} R$ is multiplicative by the concavity of $f$, and finitely generated since $f$ is piecewise linear with rational coefficients.
Hence $\calf^f_{\bullet} R$ induces a test configuration $(\calx_f,\call_f)$ of $X$.

Similarly,
a piecewise linear convex function $g : P \rightarrow \R$ with rational coefficients
induces a finitely generated increasing filtration $\calg^g_{\bullet} R$ by
\[
\calg^g_i R = \langle \chi^{(d,u)} \, | \,(d,u) \in \Gamma_P ,  g(u/d) \leq i/d \rangle.
\]
In particular,
$\calg^g_{\bullet} R$ induces a test configuration $(\calx_g,\call_g)$ of $X$.

We note that $(\calx_f,\call_f) =(\calx_g,\call_g)$ holds if $g=C-f$ for some rational number $C$.

\vspace{2mm}

Other than the Donaldson-Futaki invariant $\DF(\calx,\call)$,
there exists another invariant $\Ding(\calx,\call)$ introduced in \cite{Be}, called the \emph{Ding invariant} of $(\calx,\call)$,
which also can be used to define K-stability.

For toric test configurations,
the following formulas are known:

\begin{thm}[\cite{Do},{\cite[Theorem 5, Proposition 7]{Ya}}]\label{thm_DFtoric}
Under the above notation,
it holds that
\begin{align*}
\DF(\calx_f,\call_f) &=n \left(\frac{1}{\vol(P)} \int_P f(u) du - \frac{1}{\vol(\partial P)} \int_{\partial P} f(u) d\sigma \right), \\
\Ding(\calx_f,\call_f) &= f(0) - \frac{1}{\vol(P)} \int_P f(u) du, \\
\DF(\calx_g,\call_g) &=n \left(-\frac{1}{\vol(P)} \int_P g(u) du + \frac{1}{\vol(\partial P)} \int_{\partial P} g(u) d\sigma \right), \\
\Ding(\calx_g,\call_g) &= - g(0) + \frac{1}{\vol(P)} \int_P g(u) du,
\end{align*}
where $du$ is the Euclidean measure on $M_{\R}$
and $d\sigma $ is the boundary measure on $\partial P$ induced by the lattice $M$.
The volumes $\vol(P),\vol(\partial P)$ are with respect to $du, d \sigma$ respectively.

Furthermore, it holds that 
\[
\DF(\calx_f,\call_f) \geq \Ding(\calx_f,\call_f) \quad  (\text{resp.} \, \DF(\calx_g,\call_g) \geq \Ding(\calx_g,\call_g)),
\]
and the equality holds if and only if $f$(resp.\ $g$) is radically affine,
where we say that a function $\phi:P \rightarrow \R$ is radically affine 
if $\phi(tu) - \phi(0) = t (\phi(u)-\phi(0))$ for any $t \in [0,1]$ and $u \in \partial P$.
\end{thm}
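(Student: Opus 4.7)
The plan is to separate the statement into two parts: first establish the four integral formulas for $\DF$ and $\Ding$, then reduce the inequality $\DF \geq \Ding$ with its equality characterization to a one-variable concavity estimate.

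For the $\DF$ formulas in the concave case, I would start from the general asymptotic expansions
\[
N(d) = \dim R_d = a_0 d^n + a_1 d^{n-1} + O(d^{n-2}), \quad w(d) = b_0 d^{n+1} + b_1 d^n + O(d^{n-1}),
\]
where $w(d)$ is the total weight of the induced $\G_m$-action on the central fiber. In the toric setting, the graded pieces of $\calf^f_\bullet R_d$ are counted by
\[
\dim(\calf^f_i R_d / \calf^f_{i+1} R_d) = \#\set*{u \in dP \cap M}{\lfloor df(u/d)\rfloor = i},
\]
so $N(d) = \#(dP \cap M)$ and $w(d) = \sum_{u \in dP\cap M} \lfloor df(u/d)\rfloor$. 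Ehrhart theory gives $a_0 = \vol(P)$ and $a_1 = \tfrac12 \vol(\partial P)$, while Euler--Maclaurin approximation yields $b_0 = \int_P f \, du$ and $b_1 = \tfrac12 \int_{\partial P} f \, d\sigma$. Plugging these into Donaldson's formula for $\DF$ in terms of the leading coefficients gives the stated expression. The $\Ding$ formula comes from computing the log-canonical slope of $(\calx_f,\call_f)$, which in toric coordinates collapses to $f(0)$ minus the mean, as in \cite{Ya}. The convex versions follow by observing that $(\calx_g,\call_g) = (\calx_{C-g},\call_{C-g})$ for any rational constant $C$, so the formulas transform under $f = C - g$ with $C$ cancelling from the differences of means.

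For the inequality $\DF \geq \Ding$, the goal is to reduce both invariants to a single one-variable estimate. Since $P$ is reflexive, every facet lies at lattice distance $1$ from $0$, giving $\vol(\partial P) = n \vol(P)$. Subtraction then yields
\[
\DF(\calx_f,\call_f) - \Ding(\calx_f,\call_f) = \frac{n+1}{\vol(P)} \int_P f \, du - \frac{1}{\vol(P)} \int_{\partial P} f \, d\sigma - f(0).
\]
Parametrize $P$ radially by $u = tv$ with $v \in \partial P$, $t \in [0,1]$; the reflexive normalization makes the Euclidean measure decompose as $du = t^{n-1} \, dt \, d\sigma(v)$, so
\[
\int_P f \, du = \int_0^1 t^{n-1} \int_{\partial P} f(tv) \, d\sigma(v) \, dt.
\]
Concavity of $f$ along the segment $[0,v]$ gives $f(tv) \geq (1-t) f(0) + t f(v)$; inserting this and computing $\int_0^1 t^{n-1}(1-t)\,dt = 1/(n(n+1))$, $\int_0^1 t^n\,dt = 1/(n+1)$, together with $\vol(\partial P) = n \vol(P)$, yields
\[
(n+1) \int_P f \, du \geq f(0) \vol(P) + \int_{\partial P} f \, d\sigma,
\]
which is precisely $\DF - \Ding \geq 0$. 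Equality forces $f(tv) = (1-t) f(0) + t f(v)$ for all $v \in \partial P$ and $t \in [0,1]$, i.e.\ radial affineness. The convex $g$ case is identical: the signs in front of $\int_P g$ and $\int_{\partial P} g$ flip in both $\DF$ and $\Ding$, while convexity reverses the pointwise estimate to $g(tv) \leq (1-t) g(0) + t g(v)$, and the two sign flips combine to give the same integrated inequality with the same equality condition.

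The main obstacle I expect is the bookkeeping of normalizations. Both $\DF$ and $\Ding$ depend on specific conventions for weights, volumes and boundary measures, and the clean coincidence $\vol(\partial P) = n \vol(P)$ in the reflexive case is exactly what makes $\DF - \Ding$ telescope into the one-dimensional concavity estimate above. Once these normalizations are pinned down, the core argument is a short Jensen-type computation; the bulk of a full write-up is spent verifying the Ehrhart and Euler--Maclaurin expansions and unwinding the definition of $\Ding$ in the toric setting.
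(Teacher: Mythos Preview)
The paper does not prove this theorem: it is quoted with attribution to \cite{Do} and \cite[Theorem~5, Proposition~7]{Ya}, and no argument appears in the paper itself. So there is no in-paper proof to compare your proposal against.

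That said, your sketch is the standard route and is essentially correct. The formulas for $\DF$ are Donaldson's computation via the Ehrhart expansion of $N(d)$ and the weight asymptotics of $w(d)$; the cleanest way to justify the latter is not Euler--Maclaurin applied directly to $\sum_{u}\lfloor d f(u/d)\rfloor$ (the floors make the $b_1$ coefficient delicate), but rather to realize $w(d)$ as a lattice-point count in the $(n{+}1)$-dimensional polytope under the graph of $f$, after which ordinary Ehrhart gives $b_0$ and $b_1$ simultaneously. Your reduction of the convex case to the concave one via $f=C-g$ matches the paper's own remark that $(\calx_f,\call_f)=(\calx_g,\call_g)$ in that situation. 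For the inequality, your radial decomposition $du=t^{n-1}\,dt\,d\sigma(v)$ is valid precisely because each facet of a reflexive polytope lies at lattice distance $1$ from the origin, and together with the pointwise bound $f(tv)\geq(1-t)f(0)+tf(v)$ it yields $(n+1)\int_P f\,du \geq f(0)\vol(P)+\int_{\partial P} f\,d\sigma$, which is exactly $\DF-\Ding\geq 0$; this is the argument of \cite{Ya}, and your identification of the equality case with radial affineness is correct.
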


\subsection{Automorphism groups}\label{subsec_auto}

The automorphism group of toric varieties are studied by \cite{De,Co1,Co2,SMS}, etc.
For simplicity,
we only consider the Gorenstein Fano case here.


Let $v_1,\dots,v_N \in N_{\R}$ be all the vertices of $P^*$.
Then we have
\[
P=\{ u \in M_{\R} \, | \, \langle u,v_i \rangle \geq -1  \text{ for all } i\}.
\]
We denote by $D_i$ the torus invariant prime divisor on $X$ corresponding to $v_i$.

Let $S=\C[x_1,\dots,x_N]$, which is called the \emph{Cox ring} of $X$, be the polynomial ring whose variables correspond to the prime divisors $D_1,\dots,D_N$ on $X$.
Hence a torus invariant effective Weil divisor $D=\sum a_i D_i$ corresponds to
the monomial $x_1^{a_i} x_2^{a_2} \dots x_N^{a_N} \in S$, which is denoted by $x^D$.

Under this notation, $S$ is the direct sum of $\C x^D$ for all torus invariant effective Weil divisors $D$.
Hence the Cox ring is graded by the Chow group $A^1(X)$ of $X$ by
\[
S = \bigoplus_{\alpha  \in A^1(X) } S_{\alpha} = \bigoplus_{\alpha  \in A^1(X) } \left(\bigoplus_{ [D]=\alpha} \C x^D \right),
\]
where $[D]$ is the class of $D$ in $A^1(X)$.

\vspace{2mm}
We note that
the monomial $  \chi^{(d,u)} \in H^0(X,-d K_X)$ defines an effective torus invariant divisor $\sum_{i} (\langle u,v_i \rangle +d) D_i \in |-dK_X|$.
Thus we can naturally identify $H^0(X,-d K_X)$ with  $S_{ [-dK_X]} $.
Hence the section ring $R =\C[\Gamma_P]$ can be identified with the subring of $S$
\[
\bigoplus_{d \geq 0 } S_{ [-dK_X]}  \subset S.
\]

\begin{defn}\label{def_root}
An element $m \in M$ is called a \emph{root} of $P$ if there exists some $i$ such that
$ \langle m,v_i \rangle = -1$ and $\langle m,v_j \rangle \geq 0$ for any $j \neq i$. 
In other words, $m \in M$ is a root if and only if $m$ is contained in the relative interior of a facet $F$ of $P$.

A root $m$ is called \emph{semisimple} if $-m \in M$ is a root as well.
Otherwise, $m $ is called \emph{unipotent}.
\end{defn}

We note that
$-m$ is called a root in \cite{De,Co1} for a root $m$ in \autoref{def_root}.
We follow the notation in \cite{Ni}.

\begin{ex}
The reflexive polytope in \autoref{fig_root} has two semisimple roots $ \blacktriangle$ and two unipotent roots $ \bigstar$.

\begin{figure}[htbp]
  \begin{center}
    \includegraphics[scale=1]{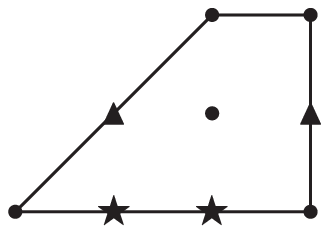}
    \caption{}
    \label{fig_root}
  \end{center}
\end{figure}

\end{ex}

\vspace{2mm}
For each root $m \in M$, we have a corresponding one-parameter subgroup $y_m : \C \rightarrow \Aut(X)$,
and the unipotent radical $U$ of $\Aut(X) $ is generated by $\bigcup_{m} y_m(\C)$,
where we take the union over all the unipotent roots of $P$.

Recall the definition of $y_m$.
Let $i$ be the unique index with $  \langle m,v_i \rangle = -1$ as in \autoref{def_root}.
We note that $D_i $ is linearly equivalent to the effective Weil divisor
$D =\sum_{j \neq i} \langle m,v_j \rangle D_j$.
For each $\alpha \in \C$, we have an automorphism of $S$ defined by
\[
x_i \mapsto x_i + \alpha x^{D}, \quad x_j \mapsto x_j \  \text{ for }  \ j \neq i ,
\]
which preserves the $A^1(X)$-grading.
This induces an automorphism 
$y_m(\alpha) \in \Aut(X)$.


\section{Examples}\label{sec_ex}

Let $P \subset M_{\R} $ be a reflexive polytope and $X$ be the corresponding Gorenstein toric Fano variety.
In this section,
we only consider examples with the simplest non-trivial unipotent radical,
that is,
we assume that there exists a unique unipotent root $m$ of $P$ throughout this section.
Hence the unipotent radical $U$ of $\Aut(X)$ is isomorphic to $\C$ via the one-parameter subgroup $y_m$. 
In this case, 
the Loewy and Socle filtrations and the Donaldson-Futaki invariants of them are described as follows.

\vspace{2mm}
Let $F$ be the unique facet of $P$ containing $m$.
Without loss of generality,
we may assume $ M = M' \times \Z$ for $M' \simeq \Z^{n-1}$, $m=(0,-1) \in  M' \times \Z$, and $F=F' \times \{-1\}$
for a lattice polytope $F' \subset {M'}_{\R}$.
By \cite[Lemma 5.9]{Ni},
there exists a piecewise linear concave function $h : F' \rightarrow  \R$ such that
\begin{align}\label{eq_P}
P=\{ (u',t) \in F' \times \R \, | \,  -1 \leq t \leq h(u') \}.
\end{align}

\begin{ex}\label{ex_deg6}
For the reflexive polytope $P \subset \R^2$ in \autoref{fig:deg6},
$F'=[-1,1] \subset \R$ and $h(u')= 1-|u'|$.

\begin{figure}[htbp]
  \begin{center}
    \includegraphics[]{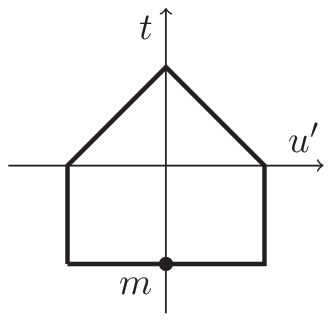}
    \caption{}
    \label{fig:deg6}
  \end{center}
\end{figure}

\end{ex}

For $u' \in dF' \cap M'$,
set 
\begin{align*}
R_d^{u'} &= \langle \chi^{(d,u)} \, | \, u=(u',l) \in dP \cap M \rangle \\
&= \langle \chi^{(d,u)} \, | \, u=(u',l)  \text{ with } l \in \Z, -d \leq l \leq \lfloor d h(u'/d) \rfloor \rangle  \subset R_d .
\end{align*}
By \ref{eq_P},
we have a decomposition 
\[
R_d=\bigoplus_{u' \in dF' \cap M'} R_d^{u'}  
\]
 as vector spaces.
In fact, this is a decomposition as $U$-modules by the following lemma:

\begin{lem}\label{lem_isom_Vd}
For any $u' \in dF' \cap M'$,
$R_d^{u'}$ is a $U$-submodule of $R_d$.
Furthermore $R_d^{u'}$ is isomorphic to $V_{ \lfloor dh(u'/d) \rfloor+d}$ in \autoref{ex_filtration_vecter_sp} as $U$-modules.
\end{lem}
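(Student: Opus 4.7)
The plan is to compute the action of $y_m(\alpha)$ on each monomial $\chi^{(d, u)}$ explicitly via the Cox ring, observe that it preserves the $M'$-coordinate of $u$, and then match the resulting action with the standard action on $V_N$ in \autoref{ex_filtration_vecter_sp}.

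First I would identify the vertex $v_i$ of $P^{*}$ with $\langle m, v_i\rangle = -1$. Since $m = (0, -1)$ and the unique facet of $P$ containing $m$ is $F = F' \times \{-1\}$, one reads off $v_i = (0, \ldots, 0, 1) \in N$. Under the identification of $\chi^{(d, u)}$ with the Cox ring monomial $\prod_j x_j^{\langle u, v_j\rangle + d}$ recalled in \autoref{subsec_auto}, expanding $(x_i + \alpha x^D)^{\langle u, v_i\rangle + d}$ and simplifying the exponents of each $x_j$ via $\langle u + km, v_j\rangle = \langle u, v_j\rangle + k\langle m, v_j\rangle$ produces
$$y_m(\alpha) \cdot \chi^{(d, u)} = \sum_{k = 0}^{\langle u, v_i\rangle + d} \binom{\langle u, v_i\rangle + d}{k} \alpha^k \chi^{(d, u + km)}.$$

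Specializing to $u = (u', l)$, we have $\langle u, v_i\rangle = l$ and $u + km = (u', l - k)$, so
$$y_m(\alpha) \cdot \chi^{(d, (u', l))} = \sum_{k = 0}^{l + d} \binom{l + d}{k} \alpha^k \chi^{(d, (u', l - k))}.$$
The $M'$-component $u'$ is preserved, and the range $0 \leq k \leq l + d$ forces $-d \leq l - k \leq l$, so each summand remains in $R_d^{u'}$; this proves $R_d^{u'}$ is a $U$-submodule of $R_d$.

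For the second claim, set $N := \lfloor d h(u'/d) \rfloor + d$ and define $\psi : R_d^{u'} \to V_N$ on the natural bases by $\psi\bigl(\chi^{(d, (u', l' - d))}\bigr) = x^{l'}$ for $0 \leq l' \leq N$. Under $\psi$ the formula above reads $x^{l + d} \mapsto \sum_k \binom{l + d}{k} \alpha^k x^{l + d - k} = (x + \alpha)^{l + d}$, which is precisely the $U$-action on $V_N$ described in \autoref{ex_filtration_vecter_sp}; hence $\psi$ is a $U$-equivariant isomorphism. The only delicate step is the Cox ring translation producing the binomial formula; once that is in hand, the identification with $V_N$ is immediate bookkeeping.
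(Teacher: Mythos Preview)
Your proof is correct and follows essentially the same route as the paper: compute the $U$-action on $\chi^{(d,u)}$ via the Cox ring, expand $(x_i+\alpha x^{D})^{\langle u,v_i\rangle+d}$ binomially, simplify the exponents using $\langle u+km,v_j\rangle=\langle u,v_j\rangle+k\langle m,v_j\rangle$ to obtain $\sum_k\binom{l+d}{k}\alpha^k\chi^{(d,(u',l-k))}$, and then match with $x^{l+d}\mapsto(x+\alpha)^{l+d}$ in $V_N$. The paper's argument and yours differ only in notation.
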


\begin{proof}
As in \autoref{subsec_auto}, let $D_1,\dots,D_N$ be all the torus invariant prime divisors on $X$.
We may assume that the facet $F \ni m=(0,-1)$ corresponds to $D_1$.
Recall that $\alpha \in \C=U \subset \Aut(X)$ acts on the Cox ring $S$ by
\[
x_1 \mapsto x_1 + \alpha x^{D}, \quad x_i \mapsto x_i \ \text{ for } \ i \geq 2 ,
\]
where $D=\sum_{i \geq 2} \langle m, v_i \rangle D_i$.

Fix $u' \in dF' \cap M'$.
For simplicity, set $\chi_l=\chi^{(d,u)} \in R_d^{u'} $ for $u=(u',l)$ with $-d \leq l \leq \lfloor d h(u'/d) \rfloor$.
By the identification of $R=\C[\Gamma_P]$ with $\bigoplus_{d\geq 0} S_{[-dK_X]}$ in \autoref{subsec_auto},
$\chi_{l}  \in R_d^{u'}  \subset R$ is identified with
\[
X_l:= \prod_{i=1}^N x_i^{\langle u, v_i \rangle +d}  \in S.
\]
Since $v_1 = (0,1) \in N' \times \Z$, where $N' $ is the dual lattice of $M'$,
we have $\langle u, v_1 \rangle + d =l+d$.
Hence 
\[
X_l= x_1^{l+d} \prod_{i=2}^N x_i^{\langle u, v_i \rangle +d}  \in S,
\]
which is mapped to 
\[
(x_1+\alpha x^D)^{l+d} \prod_{i=2}^N x_i^{\langle u, v_i \rangle +d}   
\]
by the action of $\alpha \in \C$.
Since $x^D=  \prod_{i=2}^N x_i^{\langle m, v_i \rangle} $,
\begin{align*}
(x_1+\alpha x^D)^{l+d} &= \sum_{j=0}^{l+d}  \binom{l+d}{j} \alpha^{j} x_1^{l+d-j}   x^{jD}  \\
&=   \sum_{j=0}^{l+d}  \binom{l+d}{j} \alpha^{j} x_1^{l+d-j}   \prod_{i=2}^N x_i^{j\langle m, v_i \rangle} .
\end{align*}
Thus by the action of $\alpha \in \C$, $X_l$ is mapped  to
\begin{align*}
(x_1+\alpha x^D)^{l+d} \prod_{i=2}^N x_i^{\langle u, v_i \rangle +d} 
&= \left( \sum_{j=0}^{l+d}  \binom{l+d}{j} \alpha^{j} x_1^{l+d-j}   \prod_{i=2}^N x_i^{j\langle m, v_i \rangle} \right)  \prod_{i=2}^N x_i^{\langle u, v_i \rangle +d} \\
&=\sum_{j=0}^{l+d}  \binom{l+d}{j} \alpha^{j} x_1^{l+d-j}   \prod_{i=2}^N x_i^{\langle u+jm, v_i \rangle +d}\\
&=\sum_{j=0}^{l+d}  \binom{l+d}{j} \alpha^{j} X_{l-j},
\end{align*}
where the last equality follows from $ u+jm=(u',l) +j(0,-1)=(u',l-j)$.
In particular,
$\langle X_l \, | \, -d \leq l \leq  \lfloor dh(u'/d) \rfloor \rangle \subset S$ is closed under the action of $U$.
Hence so is $R_d^{u'} =\langle \chi_l \, | \, -d \leq l \leq  \lfloor dh(u'/d) \rfloor \rangle \subset R_d $,
i.e.\
$R_d^{u'} $ is a $U$-submodule.

By the above argument,
\begin{align}\label{eq_isom}
R_d^{u'}  \rightarrow V_{ \lfloor dh(u'/d) \rfloor +d} \quad : \quad  \chi_l \mapsto x^{l+d}
\end{align}
is an isomorphism as $U$-modules.
\end{proof}

\begin{lem}\label{lem_loewy_socle}
Under the above setting, for $u =(u',l) \in d P \cap M$,
$\chi^{(d,u)} \in R_d$ is contained in $\lfik$ if and only if
$l \leq dh(u'/d) -i$. 

On the other hand,
$\chi^{(d,u)} \in R_d$ is contained in $\sfik$ if and only if
$l \leq i-d $.
\end{lem}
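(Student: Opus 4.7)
The plan is to reduce the computation to the model case of \autoref{ex_filtration_vecter_sp} via the decomposition established in \autoref{lem_isom_Vd}. Since $R_d = \bigoplus_{u' \in dF' \cap M'} R_d^{u'}$ as $U$-modules, and Loewy/Socle filtrations are compatible with direct sums, I expect
\[
\calf^L_i R_d = \bigoplus_{u'} \calf^L_i R_d^{u'}, \qquad \calg^S_i R_d = \bigoplus_{u'} \calg^S_i R_d^{u'},
\]
so it suffices to identify the filtrations on each summand $R_d^{u'}$.

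First I would verify the compatibility with direct sums. For the Socle filtration this is immediate from $(V \oplus W)^U = V^U \oplus W^U$, applied inductively. For the Loewy filtration, the minimality of $\calf^L_j$ as a $U$-submodule with semisimple quotient passes through each factor because the projection of any such submodule to each summand is again such a submodule; inductively one deduces compatibility.

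Next, I apply \autoref{lem_isom_Vd}: the isomorphism $R_d^{u'} \simeq V_{N(u')}$ with $N(u') := \lfloor dh(u'/d) \rfloor + d$ sends $\chi^{(d,(u',l))}$ to $x^{l+d}$. By \autoref{ex_filtration_vecter_sp},
\[
\calf^L_i V_{N(u')} = \{ f \, : \, \deg f \leq N(u') - i \}, \qquad \calg^S_i V_{N(u')} = \{ f \, : \, \deg f \leq i \}.
\]
Translating back through \ref{eq_isom}, $\chi^{(d,(u',l))} \in \calf^L_i R_d^{u'}$ iff $l + d \leq N(u') - i$, i.e.\ $l \leq \lfloor dh(u'/d) \rfloor - i$; since $l$ and $i$ are integers this is equivalent to $l \leq dh(u'/d) - i$. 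Similarly, $\chi^{(d,(u',l))} \in \calg^S_i R_d^{u'}$ iff $l + d \leq i$, i.e.\ $l \leq i - d$. Combined with the direct sum decomposition, this yields the claim.

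The only nontrivial step is the compatibility of the Loewy filtration with direct sums (the Socle case being purely formal). I expect this to follow from a short argument using projections onto summands, since any $U$-submodule with semisimple quotient in $V \oplus W$ contains the sum of its projections, and both projections are themselves such submodules.
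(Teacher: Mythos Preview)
Your proposal is correct and follows essentially the same route as the paper: decompose $R_d$ into the $U$-submodules $R_d^{u'}$ via \autoref{lem_isom_Vd}, use compatibility of Loewy/Socle filtrations with direct sums, and then read off the answer from \autoref{ex_filtration_vecter_sp} through the isomorphism \ref{eq_isom}, removing the floor using that $l$ and $i$ are integers. The paper simply asserts the direct-sum compatibility without proof; your sketch of it is slightly off (for the Loewy step you want $M \cap V$ and $M \cap W$ rather than projections---or, more cleanly, use that $\calf^L_i V$ is spanned by $D_i\cdots D_1 v$ for $D_k \in \mathfrak{u}$), but this is easily repaired.
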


\begin{proof}
By \ref{eq_P}, this lemma holds for $i <0$.
Hence we may assume $i \geq 0$.

We use the notation in the proof of \autoref{lem_isom_Vd}.
By \autoref{lem_isom_Vd},
we have a decomposition $ R_d=\bigoplus_{u' \in dF' \cap M'} R_d^{u'}  $ as $U$-modules.
Hence $\lfik=\bigoplus_{u' \in dF' \cap M'} \calf^L_i R_d^{u'}$ holds.

Since $\calf^L_i V_{ \lfloor dh(u'/d) \rfloor +d} = \langle x^j \, | \, 0 \leq j \leq  \lfloor dh(u'/d)  \rfloor  + d-i \rangle$ by \autoref{ex_filtration_vecter_sp},
we have 
\[
\calf^L_i R_d^{u'} = \langle \chi_l  \, | \, -d \leq l \leq  \lfloor dh(u'/d) \rfloor -i \rangle
\]
by \ref{eq_isom}.
Thus $\chi_l= \chi^{(d,u)}$ for $u =(u',l) $ is contained in $ \calf^L_i R_d  $ if and only if 
$l \leq  \lfloor dh(u'/d) \rfloor -i $,
which is equivalent to $ l \leq   dh(u'/d) -i $  since $l$ and $i$ are integers.

Similarly,
we have $\sfik=\bigoplus_{u' \in dF' \cap M'} \calg^S_i R_d^{u'}$ and
\[
\calg^S_i R_d^{u'} = \langle \chi_l  \, | \, -d \leq l \leq -d+ i \rangle.
\]
Hence $\chi_l= \chi^{(d,u)}$ is contained in $ \calg^S_i R_d  $ if and only if $l \leq -d+i$.
\end{proof}

\begin{prop}\label{prop_filtration}
Under the above setting,
the Loewy filtration $\lf$ of $X$ coincides with
the decreasing toric filtration $\calf^f_{\bullet} R$ induced by the concave function $f$ defined as
\[
f: P \rightarrow \R, \quad (u',t) \mapsto h(u')-t.
\]

On the other hand,
the Socle filtration $\sf$ of $X$ coincides with
the increasing toric filtration $\calg^g_{\bullet} R$ induced by the affine (hence convex) function $g$ defined as
\[
g: P \rightarrow \R, \quad (u',t) \mapsto t+1.
\]
\end{prop}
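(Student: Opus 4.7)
The plan is to deduce the proposition directly from \autoref{lem_loewy_socle} by unwinding definitions on both sides. Concretely, both the Loewy/Socle filtrations and the toric filtrations $\calf^f_{\bullet} R$, $\calg^g_{\bullet} R$ are defined degree-by-degree and spanned by the torus-weight monomials $\chi^{(d,u)}$, so it suffices to compare for each $d \geq 0$ the set of $u = (u',l) \in dP \cap M$ such that $\chi^{(d,u)}$ lies in each filtrant.

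First I would check that the functions $f(u',t) = h(u') - t$ and $g(u',t) = t+1$ are genuine inputs for the toric construction: $f$ is concave with rational coefficients because $h$ is a piecewise linear concave function with rational coefficients and $-t$ is linear, while $g$ is affine (in particular both convex and concave) with integer coefficients. Hence both $\calf^f_{\bullet} R$ and $\calg^g_{\bullet} R$ are well-defined finitely generated toric filtrations in the sense of \autoref{sec_toric}.

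For the Loewy case, by \autoref{lem_loewy_socle} the monomial $\chi^{(d,u)}$ with $u = (u',l)$ belongs to $\lfik$ exactly when $l \leq dh(u'/d) - i$, equivalently $h(u'/d) - l/d \geq i/d$, i.e.\ $f(u/d) \geq i/d$. This is precisely the condition defining $\calf^f_i R_d$, so $\lfik = \calf^f_i R_d$ for every $d$ and $i$; for $i < 0$ both sides equal $R_d$ by the conventions of \autoref{def_loewy_socle_Section_ring} and by $f \geq 0$ on $P$. Summing over $d$ yields $\lf = \calf^f_{\bullet} R$. For the Socle case, \autoref{lem_loewy_socle} gives $\chi^{(d,u)} \in \sfik$ iff $l \leq i - d$, i.e.\ $l/d + 1 \leq i/d$, i.e.\ $g(u/d) \leq i/d$, which is exactly the condition defining $\calg^g_i R_d$; again $i < 0$ is handled by convention and by $g \geq 0$ on $P$.

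There is essentially no obstacle here beyond correctly matching inequalities: all the serious work is already done in \autoref{lem_isom_Vd} and \autoref{lem_loewy_socle}. The only mildly delicate point is that $f$ involves the \emph{non-integer} function $h(u'/d)$ rather than $\lfloor dh(u'/d)\rfloor/d$, but since $l$ and $i$ are integers the inequalities $l \leq \lfloor dh(u'/d)\rfloor - i$ and $l \leq dh(u'/d) - i$ are equivalent, as already recorded in the proof of \autoref{lem_loewy_socle}.
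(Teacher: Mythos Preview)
Your proposal is correct and follows essentially the same approach as the paper: both arguments unwind the definition of the toric filtrations $\calf^f_\bullet R$ and $\calg^g_\bullet R$ to obtain the inequalities $l \leq dh(u'/d) - i$ and $l \leq i - d$, and then invoke \autoref{lem_loewy_socle} to identify these with $\lfik$ and $\sfik$. Your write-up is in fact a bit more thorough than the paper's, since you explicitly check concavity/convexity of $f$ and $g$, handle the case $i < 0$, and record the floor-versus-real-value equivalence.
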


\begin{proof}
By the definition of toric filtrations, for $u =(u',l) \in d P \cap M$,
$\chi^{(d,u)} \in R_d$
is contained in $\calf^f_i R_d$ if and only if 
\[
i/d \leq f(u/d) =f(u'/d,l/d)= h(u'/d) - l/d,
\]
which is equivalent to $l \leq dh(u'/d) - i$.
Hence $\calf^f_{\bullet} R$ coincides with the Loewy filtration $\lf$ by \autoref{lem_loewy_socle}.

Similarly,
$\chi^{(d,u)} \in R_d$
is contained in $\calg^g_i R_d$ if and only if 
\[
i/d \geq g(u/d) = g(u'/d,l/d)=l/d+1,
\]
which is equivalent to $ l \leq i-d$.
Hence $\calg^g_{\bullet} R$ coincides with the Socle filtration $\sf$ by \autoref{lem_loewy_socle}.
\end{proof}

Since $P=\{ (u',t) \in F' \times \R \, | \,  -1 \leq t \leq h(u') \}$,
roughly  \autoref{prop_filtration} states
that the Loewy (resp.\ Socle) filtration is determined by the distance from the top facets of $P$ defined by $h$ 
(resp.\ the distance from the bottom facet $F=F'\times \{-1\}$).

\vspace{2mm}
By \autoref{prop_filtration},
both $\lf$ and $\sf$ are finitely generated, and hence induce test configurations $\tcL$ and $\tcS$, respectively.
By \autoref{thm_DFtoric},
we can compute the Donaldson-Futaki invariant and the Ding invariant of these test configurations as follows:

\begin{cor}\label{cor_invariants}
It holds that
\begin{align*}
\Ding \tcL &=  \frac{1}{\vol(P)} \int_P (h(0) - h(u') +t ) du'dt, \\
\DF \tcS =\Ding \tcS &=  \frac{1}{\vol(P)} \int_P t \, du'dt  .
\end{align*}
If $h : F' \rightarrow \R$ is radically affine, $\DF \tcL =\Ding \tcL $ holds.
\end{cor}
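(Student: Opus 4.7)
The plan is to reduce the corollary to a direct computation by combining \autoref{prop_filtration} with the integral formulas in \autoref{thm_DFtoric}. By \autoref{prop_filtration}, $\tcL = (\calx_f, \call_f)$ for the concave piecewise linear function $f(u',t) = h(u') - t$ on $P$, and $\tcS = (\calx_g, \call_g)$ for the convex (in fact affine) function $g(u',t) = t + 1$ on $P$. Hence the invariants $\DF$ and $\Ding$ of both test configurations are given by the formulas of \autoref{thm_DFtoric}, and only an evaluation and rearrangement are required.

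For the Loewy filtration, I would plug $f(u',t) = h(u') - t$ into the Ding formula, noting $f(0) = h(0)$, so that
\[
\Ding\tcL = h(0) - \frac{1}{\vol(P)} \int_P \bigl(h(u') - t\bigr)\, du'\, dt = \frac{1}{\vol(P)} \int_P \bigl(h(0) - h(u') + t\bigr)\, du'\, dt,
\]
which is the first claim. For the Socle filtration, I would plug $g(u',t) = t+1$ into the formulas with $g(0) = 1$; in the Ding formula the constants $-1$ and $\vol(P)^{-1}\int_P 1\, du = 1$ cancel, leaving $\Ding\tcS = \vol(P)^{-1} \int_P t\, du'\, dt$. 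Since $g$ is affine (in particular radically affine), the equality case of \autoref{thm_DFtoric} gives $\DF\tcS = \Ding\tcS$, which completes the second line.

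It remains to handle the last sentence: if $h\colon F' \to \R$ is radically affine, then $\DF\tcL = \Ding\tcL$. By \autoref{thm_DFtoric}, this equality is equivalent to $f$ being radically affine on $P$, so the only real task is translating radical affinity of $h$ on $F'$ to that of $f$ on $P$. This is essentially a bookkeeping check: for any $u = (u',s) \in \partial P$ and $t \in [0,1]$, one computes $f(tu) - f(0) = h(tu') - h(0) - ts$ and $t(f(u) - f(0)) = t(h(u') - h(0)) - ts$, so the two sides agree precisely when $h(tu') - h(0) = t(h(u') - h(0))$. Because the bottom facet $F = F' \times \{-1\}$ of $P$ has the form $\{(u',-1) : u' \in F'\}$, every $u' \in F'$ arises as the first coordinate of a boundary point of $P$, and the required identity for $u' \in F'$ follows from the radically affine hypothesis on $h$ (which a priori is imposed only for $u' \in \partial F'$) by the standard radial extension argument using convexity of $F'$ around $0$.

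The computations are routine; the only mildly subtle point is this last reduction from radical affinity on $\partial F'$ to the needed identity for all $u' \in F'$, but this is handled by the convexity of $F'$ and $0 \in F'$ (which holds because $m = (0,-1) \in F$).
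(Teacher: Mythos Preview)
Your proof is correct and follows exactly the paper's approach: apply \autoref{prop_filtration} to identify $\tcL,\tcS$ with the toric test configurations of $f$ and $g$, then plug into the formulas of \autoref{thm_DFtoric}. The paper's own proof simply asserts that $f(u',t)=h(u')-t$ is radically affine if and only if $h$ is; you go further and actually verify the needed direction, correctly noting that radial affinity of $h$ on $\partial F'$ extends to all of $F'$ because $0$ lies in the relative interior of $F'$ (as $m=(0,-1)$ lies in the relative interior of the facet $F=F'\times\{-1\}$).
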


\begin{proof}
This follows from \autoref{thm_DFtoric} and \autoref{prop_filtration}.
We note that $g(u',t)=t+1$ is affine, in particular, radically affine.
On the other hand, $f(u',t) = h(u') -t$ is radically affine if and only if so is $h$.
\end{proof}

In all the following examples,
$h$ is radically affine and hence $\DF \tcL = \Ding \tcL $ holds.

\subsection{A singular toric del Pezzo surface}\label{subsec_surface}

In this subsection, we give a counterexample to \autoref{conj_main}
with singular $X$.

\vspace{2mm}
Let $P \subset \R^2$ be the reflexive polytope in \autoref{fig:deg6}.
We note that the corresponding $X$ is a singular del Pezzo surface of degree $6$ with one ordinary double point. 
In this case,
$F' = [-1,1] $ 
and $h:F' \rightarrow \R$ is defined by $h(x) = 1-|x|$ as stated in \autoref{ex_deg6}.
Since $h$ is radically affine, 
we have
\begin{align*}
\DF \tcL &=  \frac{1}{\vol(P)} \int_P (|x| + t ) dxdt = \frac29 >0, \\
\DF \tcS &=  \frac{1}{\vol(P)} \int_P t \, dxdt =- \frac29 < 0 .
\end{align*}
by \autoref{cor_invariants}.
Hence the Loewy filtration does not destabilize $X$, but the Socle filtration does.

\subsection{A smooth toric Fano $3$-fold}\label{subsec_3fold}

In this subsection, 
we show \autoref{thm_main}, i.e.\
we give a counterexample to \autoref{conj_main} with smooth $X$.

The reflexive polytope 
\[
F' =\Conv((1,1), (0,1), (-2,-1),(1,-1)) \subset \R^2
\]
in \autoref{fig:sm_3fold} corresponds to the Hirzebruch surface $\Sigma_1=\P_{\P^1}(\calo \oplus \calo(-1))$.
Let $X$ be the smooth toric Fano $3$-fold obtained as the blow-up of $\Sigma_1 \times \P^1$
along $C \times \{p\}$, where $C \subset \Sigma_1$ is the torus invariant section with $(C^2)=1$ and $p \in \P^1$ is a torus invariant point.
Since $\Sigma_1 \times \P^1$ corresponds to $F' \times [-1,1]$,
the polytope $P $ corresponding to $X$ is written as
\[
P=\left\{ (x,y,t) \in F' \times \R \subset \R^3 \, | \, -1 \leq t \leq h(x,y) :=\min \{ 1, 1+y\} \right\}.
\]
We note that $P$ has two semisimple roots and one unipotent root $m=(0,0,-1)$.

\begin{figure}[htbp]
  \begin{center}
    \includegraphics[]{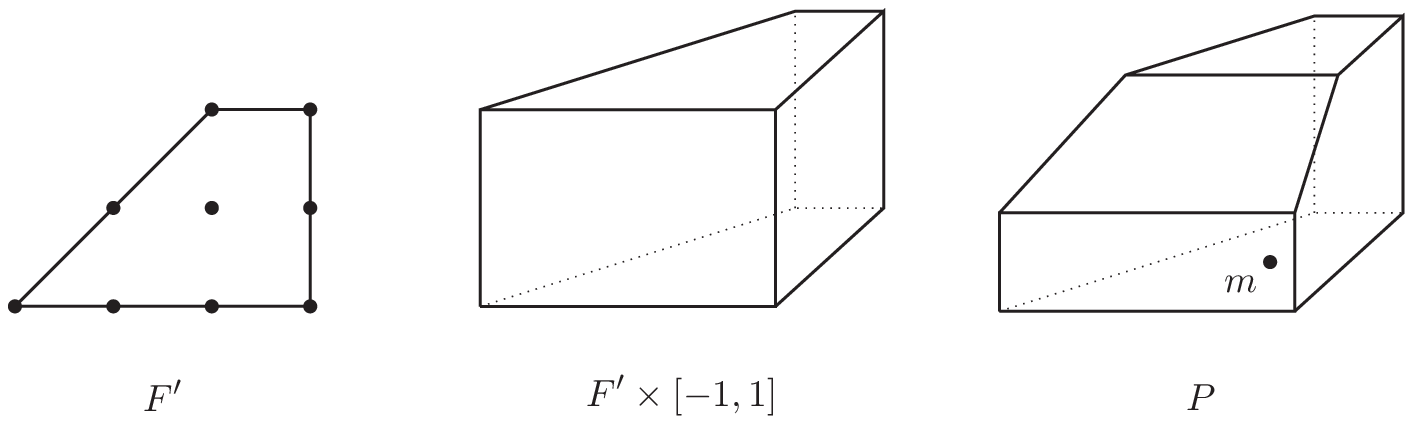}
    \caption{}
    \label{fig:sm_3fold}
  \end{center}
\end{figure}

Since $h$ is radically affine, 
we have
\begin{align*}
\DF \tcL &=  \frac{1}{\vol(P)} \int_P (\max \{ 0,-y\} + t ) dxdydt = \left(\frac{20}{3} \right)^{-1} \! \! \! \! \cdot \frac78 = \frac{21}{160}, \\
\DF \tcS &=  \frac{1}{\vol(P)} \int_P t \, dxdydt = \left(\frac{20}{3} \right)^{-1} \! \! \! \! \cdot \left(-\frac78\right) = -\frac{21}{160} .
\end{align*}

\begin{proof}[Proof of \autoref{thm_main}]
The above $X$ satisfies the conditions in the theorem.
\end{proof}

\subsection{A singular toric Fano $3$-fold}\label{subsec_sing3fold}

For examples in Subsections \ref{subsec_surface} and \ref{subsec_3fold},
the invariant $\DF \tcS$ is negative, and hence
the Socle filtration destabilizes $X$.

As we will see in Appendix,
the Socle filtration is the filtration induced from a valuation on the function field of $X$,
and hence multiplicative in general.
Thus we might expect that the Socle filtration destabilizes any $\Q$-Fano varieties.

However, the answer is no, at least for singular $X$.
The following is an example of a Gorenstein toric Fano $3$-fold with non-reductive automorphism group such that
$\DF \tcS = \Ding \tcS >0$.

\vspace{2mm}

Let $F' \subset \R^2$ be the hexagon with vertexes $(1,0),(0,1),(-1,1),(-1,0),(0,-1),(1,-1)$.
We define a function $h : F' \rightarrow \R$ by
\[
  h(x,y) = \left\{
    \begin{array}{ll}
      1-2x & (x \geq 0) \\
      1-x & (x \leq 0)
    \end{array}
  \right.
\]
for $(x,y) \in F'$.
The polytope $P \subset \R^3$ in \autoref{fig:3fold} is defined by $h$ and \ref{eq_P}.
We can check that $P$ is reflexive,
and $m=(0,0,-1) \in P$ is the unique unipotent root.
By \autoref{cor_invariants},
we can compute 
\begin{align*}
\DF \tcL &= \left(\frac{16}{3} \right)^{-1} \! \! \! \! \cdot \left(-\frac38\right) =- \frac{9}{128} < 0, \\
\DF \tcS & = \left(\frac{16}{3} \right)^{-1} \! \! \! \! \cdot \frac38 = \frac{9}{128} >0. 
\end{align*}

\begin{figure}[htbp]
  \begin{center}
    \includegraphics[]{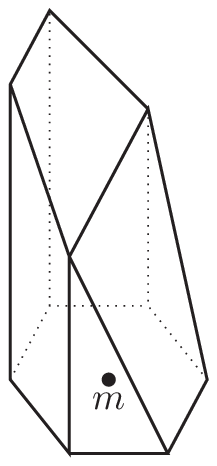}
    \caption{}
    \label{fig:3fold}
  \end{center}
\end{figure}


\appendix

\section{On Socle filtrations}\label{sec_app}

Let $R=\bigoplus_{d=0}^{\infty} R_d$ be a finitely generated graded integral $\C$-algebra
and set $X=\Proj R$.
We do not assume that $X $ is Fano.
Let $U$ be a unipotent algebraic group which acts on $R $ as a graded $\C$-algebra.
By exactly the same definition as \autoref{def_loewy_socle},
we can define the Socle filtration $\sf$ of $R$.

\vspace{2mm}

Recall that an increasing filtration $\calg_{\bullet} R$ is multiplicative if and only if 
$\calg_{i} R \cdot \calg_{j} R \subset \calg_{i+j} R$ holds for any $i,j$.

\begin{lem}\label{lem_multiplicative}
Under the above setting,
the Socle filtration $\sf$ is multiplicative.
\end{lem}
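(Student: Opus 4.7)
The plan is to prove $\calg^S_i R \cdot \calg^S_j R \subset \calg^S_{i+j} R$ by induction on $i+j$, using a Leibniz-type identity for the operator $u-1$ where $u \in U$ acts as a graded algebra automorphism. The case $i < 0$ or $j < 0$ is trivial since then one of the two subspaces is $\{0\}$, so assume $i, j \geq 0$.

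The first ingredient is the observation, immediate from the inductive definition of the Socle filtration, that $\calg^S_i R_d$ is itself a $U$-submodule of $R_d$ (it is the preimage of a $U$-submodule under the quotient $R_d \twoheadrightarrow R_d/\calg^S_{i-1} R_d$), and that
\[
f \in \calg^S_i R_d \iff (u-1)(f) \in \calg^S_{i-1} R_d \textfor \text{all } u \in U.
\]
The second ingredient is that since $u \in U$ acts on $R$ as an algebra automorphism,
\[
(u-1)(fg) \;=\; u(f)\cdot (u-1)(g) \;+\; (u-1)(f) \cdot g
\]
for every $f, g \in R$.

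The base case $i=j=0$ says that if $f,g \in R^U$, then $fg \in R^U$, which is clear because $U$ acts by algebra automorphisms. For the inductive step, assume the multiplicativity holds for all pairs $(i', j')$ with $i'+j' < i+j$, and take $f \in \calg^S_i R$, $g \in \calg^S_j R$. For any $u \in U$, we have $u(f) \in \calg^S_i R$ (by $U$-stability), $(u-1)(g) \in \calg^S_{j-1} R$ and $(u-1)(f) \in \calg^S_{i-1} R$ (by the characterization above). By the induction hypothesis applied to the pairs $(i,j-1)$ and $(i-1,j)$,
\[
u(f)\cdot(u-1)(g) \in \calg^S_{i+j-1} R, \qquad (u-1)(f) \cdot g \in \calg^S_{i+j-1} R,
\]
so $(u-1)(fg) \in \calg^S_{i+j-1} R$ for all $u \in U$. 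By the characterization again, $fg \in \calg^S_{i+j} R$, completing the induction.

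There is no real obstacle here; the entire content of the lemma is the Leibniz rule for $u-1$ together with the inductive unfolding of the definition of $\calg^S_\bullet$. The only care needed is to note that the argument respects the grading, which is automatic since $U$ acts by graded algebra automorphisms and each $\calg^S_i R$ was defined degree-by-degree from the Socle filtration of $R_d$.
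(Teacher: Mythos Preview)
Your argument is correct. The characterization $f \in \calg^S_i R_d \Leftrightarrow (u-1)f \in \calg^S_{i-1} R_d$ for all $u \in U$ is exactly the definition unwound (and for $i=0$ it reads $(u-1)f \in \calg^S_{-1} R_d = \{0\}$, i.e.\ $f \in R_d^U$), the Leibniz identity for $u-1$ is a one-line check, and the induction on $i+j$ goes through; the boundary cases where $i=0$ or $j=0$ are absorbed since then one of the two terms in the Leibniz expansion is already zero.

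The paper argues differently: it passes to the Lie algebra $\mathfrak{u}$, establishes the auxiliary description $\calg^S_i R_d = \{x : D_{i+1}\cdots D_1 x = 0 \text{ for all } D_k \in \mathfrak{u}\}$ (\autoref{lem_socle_derivation}), and then uses the iterated Leibniz formula for derivations together with a pigeonhole count on $\{0,1\}^{i+j+1}$ to kill every term in the expansion of $D_{i+j+1}\cdots D_1(xy)$. Your route is more elementary---it stays at the group level and avoids the Lie algebra entirely---and is arguably cleaner for this lemma in isolation. The paper's Lie-algebraic setup, on the other hand, is not wasted: the derivation description and the explicit expansion are reused in the proof of the sharper \autoref{prop_app}, which shows that the Socle filtration actually defines a valuation (i.e.\ $\iota(xy)=\iota(x)+\iota(y)$ exactly). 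That refinement needs finer control over which terms survive, and the Lie-algebra formulation is well suited to it; your group-level identity would need more work to extract the same conclusion.
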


To show this lemma, we use the Lie algebra $ \mathfrak{u}$ of $U$.
Since $U$ acts on $R$ as a $\C$-algebra,
any $D \in \mu $ acts on $R$ as a $\C$-derivation,
i.e.\ $Dc=0 $ for any $c \in \C$ and 
\[
D(xy) = (Dx) y + x (Dy) 
\]
holds for any $x,y \in R$.
By induction, for any $ D_1,\dots,D_N \in \mu$ 
it holds that
\begin{align}\label{eq_DDDD(xy)}
D_{ N} \cdots D_1 (xy) =  \sum_{(\ep_1,\dots,\ep_{N})  \in \{0,1\}^{N}} (D_{N}^{\ep_{N}} \cdots D_1^{\ep_1} x)(D_{N}^{1-\ep_{N}} \cdots D_1^{1-\ep_1} y),
\end{align}
where $D^0 x=x$ by convention.

\begin{lem}\label{lem_socle_derivation}
For any $i, d \geq 0$,
it holds that
\[
\sfik = \{ x \in R_d \, | \, D_{i+1} \dots D_1 x =0 \text{ for any } D_1,\dots,D_{i+1} \in \mu \}.
\]
\end{lem}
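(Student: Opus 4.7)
The plan is to fix $d \geq 0$ and argue inside the finite-dimensional $U$-module $R_d$; the claim about $\sfik$ is really a statement about $\calg^S_i R_d$ alone. The key prerequisite is the standard identification for a connected unipotent algebraic group $U$ (in characteristic zero) acting on a finite-dimensional $\C$-vector space $V$:
\[
V^U = \{v \in V : Dv = 0 \text{ for all } D \in \mu\}.
\]
This holds because $\exp : \mu \to U$ is a bijection of varieties, each $D \in \mu$ acts nilpotently on $V$, and the polynomial identity $\exp(tD) v = v$ in $t$ forces $Dv = 0$; the converse is immediate.

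I would then proceed by induction on $i$. The base case $i = 0$ is exactly the display above: $\calg^S_0 R_d = R_d^U$ is cut out by the single-derivation equations $D_1 x = 0$ as $D_1$ ranges over $\mu$.

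For the inductive step, assume the characterization holds for $i-1$. By the defining recursion, an element $x \in R_d$ lies in $\calg^S_i R_d$ if and only if its image in $R_d / \calg^S_{i-1} R_d$ is $U$-invariant, which by the prerequisite applied to the quotient is equivalent to $Dx \in \calg^S_{i-1} R_d$ for every $D \in \mu$. Applying the inductive hypothesis to $Dx$, this says $D_i \cdots D_1 (Dx) = 0$ for all $D_1, \dots, D_i \in \mu$. Relabeling $D$ as $D_{i+1}$, this reads
\[
D_i D_{i-1} \cdots D_1 D_{i+1}\, x = 0 \quad \text{for all } D_1, \dots, D_{i+1} \in \mu.
\]
Since the condition is universally quantified over an ordered tuple of elements of $\mu$, a permutation of the indices defines the same subset of $R_d$, and the condition is equivalent to $D_{i+1} \cdots D_1 x = 0$ for all $D_1, \dots, D_{i+1} \in \mu$, as desired.

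The only subtle point is the reordering at the end, which is a non-issue because each $D_j$ is independently quantified over all of $\mu$; non-commutativity of the product plays no role. Everything else is a formal unfolding of the inductive definition of the Socle filtration, so I expect no real obstacle.
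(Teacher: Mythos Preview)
Your proof is correct and follows exactly the approach the paper intends: the base case $i=0$ is the identification $R_d^U=\{x:Dx=0\text{ for all }D\in\mu\}$, and the inductive step unfolds the recursive definition of $\calg^S_i R_d$ via the quotient. The paper's own proof is in fact just the two-line sketch ``the statement holds for $i=0$; by induction on $i$, this lemma follows,'' so you have simply spelled out the details (including the harmless relabeling at the end) that the paper leaves implicit.
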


\begin{proof}
For $i=0$,
$x \in R_d$ is contained in the invariant part $\calg^S_0 R_d= (R_d)^U$ if and only if  
$Dx=0$ for any $D \in \mu$.
Hence the statement holds for $i=0$.
By the induction on $i$,
this lemma follows.
\end{proof}

\begin{proof}[Proof of \autoref{lem_multiplicative}]
Take $x \in \calg_i^S R$ and $y \in  \calg_i^S R$ for $i,j \in \Z$.
We need to show $xy \in  \calg_{i+j}^S R$.
Since $\calg_k^S R =\{0\}$ for $k < 0$ by definition,
$xy=0 \in  \calg_{i+j}^S R$ holds if $i$ or $j$ is negative.
Hence we may assume $i,j \geq 0$.

Set $N=i+j+1$
and take any $  D_1,\dots,D_{N} \in \mu$.
It suffices to show $D_{N} \dots D_1 (xy) =0 $ by \autoref{lem_socle_derivation}.
For each $(\ep_1,\dots,\ep_{N}) \in \{0,1\}^{N}$,
one of $\sum\ep_k \geq i+1$ or $\sum (1-\ep_k) \geq j+1$ must hold.
Hence $D_{N}^{\ep_{N}} \cdots D_1^{\ep_1} x=0 $ or $D_{N}^{1-\ep_{N}} \cdots D_1^{1-\ep_1} y=0$  holds by \autoref{lem_socle_derivation}.
By \ref{eq_DDDD(xy)},
we have $D_{N} \dots D_1 (xy) =0 $.
\end{proof}

In fact,
we can show the following proposition,
which refines \autoref{lem_multiplicative}.

\begin{prop}\label{prop_app}
Let $x \in \calg_i^S R \setminus \calg_{i-1}^S R $ and $y \in  \calg_j^S R \setminus  \calg_{j-1}^S R $ for $i,j \geq 0$.
Then $xy \in \calg_{i+j}^S R \setminus \calg_{i+j-1}^S R $ holds.
\end{prop}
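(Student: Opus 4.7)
By \autoref{lem_multiplicative} we already have $xy \in \calg_{i+j}^S R$, so the content of the proposition is the non-membership $xy \notin \calg_{i+j-1}^S R$. By \autoref{lem_socle_derivation}, this is equivalent to producing $F_1, \dots, F_{i+j} \in \mu$ with $F_{i+j} \cdots F_1 (xy) \neq 0$. Using \autoref{lem_socle_derivation} and the hypotheses, I would first fix $D_1, \dots, D_i \in \mu$ with $D_i \cdots D_1 x \neq 0$ and $E_1, \dots, E_j \in \mu$ with $E_j \cdots E_1 y \neq 0$. The naive guess---taking the $F_k$'s to be the concatenation $(D_1, \dots, D_i, E_1, \dots, E_j)$---runs into the main obstacle: the Leibniz expansion \eqref{eq_DDDD(xy)} produces $\binom{i+j}{i}$ surviving terms of shape $(F_A x)(F_{A^c} y)$ (indexed by subsets $A \subset \{1, \dots, i+j\}$ of size $i$), and a priori these could cancel in $R$.

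The plan is to convert this fragile cancellation question into a routine nonvanishing statement in a polynomial ring over $R$. To that end, set $N = i+j$ and consider the morphism
\[
\psi \colon \A^N \to U , \quad (t_1, \dots, t_N) \mapsto \exp(t_N F_N) \exp(t_{N-1} F_{N-1}) \cdots \exp(t_1 F_1),
\]
with the $F_k$'s as above. For $z \in R$ set $\tau_z(t) := \psi(t) \cdot z$, viewed as a polynomial in $t_1, \dots, t_N$ with values in $R$, i.e.\ an element of $R[t_1, \dots, t_N]$. Since every $\psi(t) \in U$ acts as a graded $\C$-algebra automorphism of $R$, the assignment $z \mapsto \tau_z$ is itself a ring homomorphism $R \to R[t_1, \dots, t_N]$; in particular $\tau_{xy} = \tau_x \cdot \tau_y$.

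Expanding the exponentials yields the explicit formula $\tau_z(t) = \sum_{\alpha \in \N^N} \frac{t^\alpha}{\alpha!} \, F_N^{\alpha_N} \cdots F_1^{\alpha_1} z$, so the $t^\alpha$-coefficient is an ordered composition of $|\alpha|$ elements of $\mu$ applied to $z$. Two observations then finish the argument. First, by \autoref{lem_socle_derivation} this coefficient vanishes whenever $|\alpha|$ exceeds $\min\{k : z \in \calg_k^S R\}$, so $\deg_t \tau_{xy}$ is a lower bound for the socle degree of $xy$. Second, by the concrete choice of the $F_k$'s the coefficient of $t_1 \cdots t_i$ in $\tau_x$ equals $D_i \cdots D_1 x \neq 0$ and the coefficient of $t_{i+1} \cdots t_{i+j}$ in $\tau_y$ equals $E_j \cdots E_1 y \neq 0$, so $\deg \tau_x \geq i$ and $\deg \tau_y \geq j$. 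Since $R$ is an integral domain, $R[t_1, \dots, t_N]$ is a domain in which total degree is additive under multiplication; hence $\deg \tau_{xy} = \deg \tau_x + \deg \tau_y \geq i+j$, and the first observation then forces $xy \notin \calg_{i+j-1}^S R$. Conceptually, passing to the $t$-variables replaces the delicate cancellation in $R$ by the standard fact that a polynomial ring over a domain is again a domain in which total degree is additive.
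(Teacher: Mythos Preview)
Your proof is correct and takes a genuinely different route from the paper. The paper argues directly in $R$: it selects derivations $D_1,\dots,D_m$ and exponents $(a_1,\dots,a_m)$ that are lexicographically maximal among those with $D_m^{a_m}\cdots D_1^{a_1}x\neq 0$, then (relative to these fixed $D_k$) lexicographically maximal exponents $(a'_1,\dots,a'_m)$ and further derivations $D'_1,\dots,D'_n$ for $y$, and shows that in the Leibniz expansion of $D'_n\cdots D'_1 D_m^{a_m+a'_m}\cdots D_1^{a_1+a'_1}(xy)$ every term but one vanishes by the extremality choices, leaving a single nonzero product in the integral domain $R$. Your approach sidesteps this term-isolation entirely: the ring-homomorphism identity $\tau_{xy}=\tau_x\tau_y$ together with additivity of total degree in $R[t_1,\dots,t_N]$ (valid because $R$ is a domain) replaces the paper's lexicographic bookkeeping. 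Your method is shorter and more conceptual, and it makes transparent why integrality of $R$ is exactly the hypothesis needed; the paper's argument has the compensating virtue of producing an explicit string of derivations witnessing $xy\notin\calg^S_{i+j-1}R$, which one could also recover from your proof by reading off a top-degree monomial of $\tau_{xy}$.
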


\begin{proof}
Since $xy \in \calg_{i+j}^S R$ by \autoref{lem_multiplicative},
what we need to show is $xy \not \in \calg_{i+j-1}^S R $.
By \autoref{lem_socle_derivation},
it is enough to find $ D_1,\dots,D_{i+j} \in \mu$ such that $D_{ i+j} \cdots D_1 (xy) \neq 0$.

Consider the set $\Phi$ which consists of sequences of non-negative integers
$(a_k)_{k=1}^{\infty}$ satisfying
\begin{itemize}
\item $\sum_{k=1}^{\infty} a_k=i$. 
In particular, there exists $m $ such that $a_k =0$ for any $k \geq m+1$.
\item For the above $m$,
there exist $D_1,\dots, D_m \in \mu$ such that 
$ D_{m}^{a_m} \cdots D_1^{a_1} x \neq 0$.
\end{itemize}
We note that $a_k$ could be zero even if $k \leq m$.
For simplicity,
we denote $(a_k)_{k=1}^{\infty}$ by $ (a_1,\dots,a_m) $ if $a_k =0$ for any $k \geq m+1$.

Since $x \not \in \calg_{i-1}^S R $,  $D_{i} \cdots  D_1 (x) \neq 0$ for some $D_1,\dots,D_{i} \in \mu$.
Hence $(\underbrace{1,1,\dots,1}_{i})$ is contained in $\Phi$.
In particular,
$\Phi \neq \emptyset$.

Let $(a_1,\dots,a_m) = (a_1,\dots,a_m,0,0,\dots)    \in \Phi$ be the maximum element with respect to the lexicographical order.
Take and fix $D_1,\dots,D_m \in \mu$ with $ D_m^{a_m} \cdots D_1^{a_1} x \neq 0 $.

\vspace{2mm}
Consider another set $\Phi' \subset \N^{m}$ defined as follows:
$(a'_1,\dots,a'_{m}) \in \N^{m}$ is contained in $\Phi' $ if and only if 
\begin{itemize}
\item $n:=j-  (a'_1 + \dots +a'_{m}) \geq 0$ and $ D'_n \dots D'_1 D_m^{a'_m} \cdots D_1^{a'_1} y \neq 0$ for some $D'_1,\dots, D'_n \in \mu$.
\end{itemize}
Since $y \not \in \calg_{j-1}^S R$,  $D'_j \cdots  D'_1 y \neq 0$ for some $D'_1,\dots,D'_j$.
Hence $(\underbrace{0,0,\dots,0}_{m})$ is contained in $\Phi'$.
In particular, $\Phi' \neq \emptyset$.

Let $(a'_1,\dots,a'_{m}) \in \Phi'$ be the maximum element with respect to the lexicographical order.
Take and fix $D'_1,\dots, D'_n \in \mu$ with $ D'_n \dots D'_1 D_m^{a'_m} \cdots D_1^{a'_1} y \neq 0$ for $n=j-  (a'_1 + \dots +a'_{m}) $.

\vspace{2mm}
To prove $xy \not \in \calg_{i+j-1}^S R $,
it suffices to show
\begin{align}
D'_n \cdots D'_1 D_m^{a_m+ a'_m} \cdots D_1^{a_1+a'_1} (xy) \neq 0
\end{align}
since $  \sum_{i=1}^m (a_i+a'_i) +n=\sum_{i=1}^m a_i + (n+  \sum_{i=1}^m a'_i)  =i+j$.

By \ref{eq_DDDD(xy)}, 
$D'_n \cdots D'_1 D_m^{a_m+ a'_m} \cdots D_1^{a_1+a'_1} (xy) $ is equal to
\begin{align}\label{eq_DDDDDxy}
\sum_{\boldsymbol{\alpha},\boldsymbol{\ep}} c_{\boldsymbol{\alpha},\boldsymbol{\ep}} ({D'_n}^{\ep_n} \cdots {D'_1}^{\ep_1} D_m^{\alpha_m} \cdots D_1^{\alpha_1} x )
 ({D'_n}^{1-\ep_n} \cdots {D'_1}^{1-\ep_1} D_m^{a_m+ a'_m-\alpha_m} \cdots D_1^{a_1+a'_1-\alpha_1  } y ) 
\end{align}
where the sum is taken over all $(\boldsymbol{\alpha},\boldsymbol{\ep}) =(\alpha_1,\dots,\alpha_m,\ep_1,\dots, \ep_n) $ with 
\[
\quad \alpha_i \in \{0,1,\dots, a_i+a'_i\} , \quad \boldsymbol{\ep}  \in \{0,1\}^n
\]
and the coefficient $c_{\boldsymbol{\alpha},\boldsymbol{\ep}} \in \N$ is
\[
c_{\boldsymbol{\alpha},\boldsymbol{\ep}} = \prod_{i=1}^m \binom{a_i+a'_i}{\alpha_i}.
\]
If $\sum_{i=1}^m \alpha_i  +\sum_{j=1}^n \ep_j  > i$,
it holds that  ${D'_n}^{\ep_n} \cdots {D'_1}^{\ep_1} D_m^{\alpha_m} \cdots D_1^{\alpha_1} x =0$ by $x \in \sfi$.
If  $\sum_{i=1}^m \alpha_i  +\sum_{j=1}^n \ep_j  <  i$, ${D'_n}^{1-\ep_n} \cdots {D'_1}^{1-\ep_1} D_m^{a_m+ a'_m-\alpha_m} \cdots D_1^{a_1+a'_1-\alpha_1  } y =0$
by $y \in \calg^S_j R$ and $ \sum_{i=1}^m (a_i+a'_i- \alpha_i) + \sum_{j=1}^n (1-\ep_j ) > j$.
Hence it suffices to take the sum in \ref{eq_DDDDDxy} over $(\boldsymbol{\alpha},\boldsymbol{\ep})$ with
\begin{align}\label{eq_sum=i}
\sum_{i=1}^m \alpha_i  +\sum_{j=1}^n \ep_j =i.
\end{align}

Assume that $(\boldsymbol{\alpha},\boldsymbol{\ep})$ satisfies \ref{eq_sum=i}.
By the definition of $\Phi$, 
${D'_n}^{\ep_n} \cdots {D'_1}^{\ep_1} D_m^{\alpha_m} \cdots D_1^{\alpha_1} x =0$
 if $ (\alpha_1,\dots,\alpha_m, \ep_1,\dots,\ep_n) \not \in \Phi$.
 Since $(a_1,\dots,a_m) \in \Phi$ is the maximum element,
 it suffices to take the sum in \ref{eq_DDDDDxy} over $(\boldsymbol{\alpha},\boldsymbol{\ep})$ with \ref{eq_sum=i} and
 \begin{align}\label{eq_S}
 (\alpha_1,\dots,\alpha_m, \ep_1,\dots,\ep_n)  \leq (a_1,\dots,a_m).
\end{align}

By the definition of $\Phi'$,
${D'_n}^{1-\ep_n} \cdots {D'_1}^{1-\ep_1} D_m^{a_m+ a'_m-\alpha_m} \cdots D_1^{a_1+a'_1-\alpha_1  } y=0$
 if $ (a_1+a'_1-\alpha_1,\dots,a_m+ a'_m-\alpha_m) \not \in \Phi'$.
Since $(a'_1,\dots,a'_{m})  \in \Phi'$ is the maximum element,
 it suffices to take the sum in \ref{eq_DDDDDxy} over $(\boldsymbol{\alpha},\boldsymbol{\ep})$ with \ref{eq_sum=i}, \ref{eq_S} and
 \begin{align}\label{eq_S'}
(a_1+a'_1-\alpha_1,\dots,a_m+ a'_m-\alpha_m) \leq (a'_1,\dots,a'_{m}) .
\end{align}

\vspace{2mm}
Assume that  the index $(\boldsymbol{\alpha},\boldsymbol{\ep})$  satisfies \ref{eq_sum=i}, \ref{eq_S}, and \ref{eq_S'}.
Then $\alpha_1 \leq a_1$ and $a_1+a'_1 - \alpha_1 \leq a'_1$ hold.
Hence $\alpha_1$ must be $a_1$.

Since $\alpha_1 = a_1$,
we have $\alpha_2 \leq a_2$ and $a_2+a'_2 - \alpha_2 \leq a'_2$,
which imply $\alpha_2=a_2$.
Repeating this,
$(\alpha_1,\dots,\alpha_m)$ must coincide with $(a_1,\dots,a_m)$.
By \ref{eq_sum=i} and $\sum_{i=1}^m a_i =i$,
we have $\boldsymbol{\ep} =(0,\dots,0)$.

After all,
the index $(\boldsymbol{\alpha},\boldsymbol{\ep})$ which we need to take is only  $((a_1,\dots,a_m) ,(0,\dots,0))$.
Hence $D'_n \cdots D'_1 D_m^{a_m+ a'_m} \cdots D_1^{a_1+a'_1} (xy) $ is equal to
\[
 c_{(a_1,\dots,a_m) ,(0,\dots,0) } (D_m^{a_m} \cdots D_1^{a_1} x ) ({D'_n} \dots D'_1 D_m^{a'_m} \cdots D_1^{a'_1} y),
\]
which is nonzero since both $  D_m^{a_m} \cdots D_1^{a_1} x $ and $  {D'_n} \dots D'_1 D_m^{a'_m} \cdots D_1^{a'_1} y $ are non-zero elements in the integral domain $R$,
and $c_{(a_1,\dots,a_m) ,(0,\dots,0) }  \neq 0$. 
Thus $xy \not \in \calg_{i+j-1}^S R$ follows.
\end{proof}

\autoref{prop_app} implies that the Socle filtration induces a valuation on the function field of $X$ as follows.

\begin{defn}\label{def_function}
For $x \in R$, we set
\begin{align}
\iota(x) = \inf \{ i \in \Z \, | \, x \in \sfi\} \in \{-\infty\} \cup \N.
\end{align}
We note that $ \{ i \in \Z \, | \, x \in \sfi\}  \neq \emptyset$ for any $x \in R$ since $\bigcup_{i} \sfi=R$,
and $\iota(x)=- \infty $ if and only if $x=0$, and $\iota(c)=0$ for $c \neq 0\in \C \subset R_0$.
For $x,y \in R$, $\iota(xy)=\iota(x) + \iota(y)$ holds by \autoref{prop_app}.
\end{defn}


\begin{defn}\label{def_v}
Let $K(X)$ be the function field of $X$.
We define a function $v : K(X) \rightarrow \Z \cup \{\infty\}$ by
\[
v\left(\frac{x}{y}\right) = -\iota(x) + \iota(y)
\]
for $d \geq 0, x,y \in R_d, y \neq 0$.
\end{defn}

\begin{cor}\label{cor_valuation}
The above function $v$ is well-defined.
Furthermore, $v$ is a valuation which is trivial on $\C$.
\end{cor}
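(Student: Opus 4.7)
The plan is to verify in turn that $v$ is well-defined, is multiplicative, satisfies the non-archimedean triangle inequality, and is trivial on $\C^\times$. All four properties reduce to bookkeeping with the function $\iota$, once we combine Proposition \ref{prop_app} with the basic fact that $\calg^S_\bullet R$ is an \emph{increasing} filtration.

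Well-definedness is the first step. Given two representations $x/y = x'/y'$ with $x,y \in R_d$, $x',y' \in R_{d'}$, and $y, y' \neq 0$, the equality in $K(X)$ translates to $xy' = x'y$ in $R_{d+d'}$. Since neither side vanishes, Proposition \ref{prop_app} yields $\iota(xy') = \iota(x) + \iota(y')$ and $\iota(x'y) = \iota(x') + \iota(y)$, and equating these gives $-\iota(x)+\iota(y) = -\iota(x')+\iota(y')$, exactly what is required.

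Multiplicativity $v(fg) = v(f) + v(g)$ is then immediate: writing $fg = xx'/(yy')$ and invoking Proposition \ref{prop_app} twice collapses the expression at once. Triviality on $\C^\times$ and the convention $v(0) = \infty$ follow directly from Definition \ref{def_function}: $\iota(c) = 0$ for $0 \neq c \in \C$ and $\iota(0) = -\infty$, so $v(c) = v(c/1) = 0$ and $v(0) = \infty$.

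The only property that uses something beyond Proposition \ref{prop_app} is the ultrametric inequality $v(f+g) \geq \min\{v(f),v(g)\}$. My plan is first to bring $f$ and $g$ to a common denominator, writing $f+g = (xy' + x'y)/(yy')$ with $xy', x'y \in R_{d+d'}$. Since $\calg^S_\bullet R$ is an increasing filtration of a graded ring, one has $\iota(a+b) \leq \max\{\iota(a),\iota(b)\}$ for homogeneous elements $a,b$ of the same degree; applying this to the numerator and then using Proposition \ref{prop_app} to compute $\iota(xy'), \iota(x'y), \iota(yy')$ yields the bound after a short manipulation. I expect no real obstacle here: the substantive content (that $\iota$ is additive under multiplication) has already been absorbed into Proposition \ref{prop_app}, and what remains is purely formal.
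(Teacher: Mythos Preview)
Your proposal is correct and follows essentially the same route as the paper: well-definedness via $xy'=x'y$ and the additivity of $\iota$ from \autoref{prop_app}, then the remaining valuation axioms reduced to the increasing-filtration inequality $\iota(a+b)\leq\max\{\iota(a),\iota(b)\}$ together with \autoref{prop_app}. In fact you supply more detail than the paper, which after well-definedness simply lists the axioms and asserts they can be checked from \autoref{lem_multiplicative}, \autoref{lem_socle_derivation} and \autoref{prop_app}.
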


\begin{proof}
For the well-definedness, we weed to check 
\begin{enumerate}
\item for $x,y \in R_d, y \neq 0$,  $-\iota(x) + \iota(y) \in  \Z \cup \{\infty\}$.
\item if $x/y =x'/y' \in K(X)$, it holds that  $-\iota(x) + \iota(y) = -\iota(x') + \iota(y') $.
\end{enumerate}

As in \autoref{def_function}, $\iota(y) \in \N$ if $y \neq 0$.
Since $ -\iota(x) $ is in $  \Z \cup \{\infty\} $, 
we have $-\iota(x) + \iota(y) \in  \Z \cup \{\infty\}$.
Thus (1) follows.

\vspace{2mm}
For (2), if $x/y =x'/y'  \in K(X)$,
we have $ xy' = x'y \in R$.
Then 
\[
\iota(x)+\iota(y') = \iota(xy') = \iota(x'y) =\iota(x') +\iota(y) 
\]
by \autoref{prop_app}.
Hence $-\iota(x) + \iota(y) = -\iota(x') + \iota(y') $ holds.
Thus $v $ is well-defined.

\vspace{2mm}

By \autoref{lem_multiplicative}, \autoref{lem_socle_derivation} and \autoref{prop_app},
we can check that $v$ satisfies the definition of valuation,
i.e.\
\begin{itemize}
\item $v(0) = \infty $ and $v(x) \neq \infty$ for $x \in K(X) \setminus 0$.
\item $v(x+y) \geq \min \{ v(x),v(y)\}$ for $x,y \in K(X)$, with equality if $ v(x) \neq v(y)$.
\item $v(xy) =v(x)+v(y)$  for $x,y \in K(X)$.
\item $v(a) =0$ for $a \in \C \setminus 0$.
\end{itemize}
\end{proof}

\begin{ex}\label{ex_toric_valuation}
Let $P$ be a reflexive polytope with a unique unipotent root $m=(0,-1) \in M' \times \Z$ such that
\[
P=\{ (u',t) \in F' \times \R \, | \,  -1 \leq t \leq h(u') \}
\]
for some $F',h$ as in \autoref{sec_ex}.
In this case,
the valuation induced by the Socle filtration $\sf$ is the toric valuation corresponding to $(0,-1) \in N' \times \Z$.
We note that this is \emph{not} the divisorial valuation $\ord_{D}$, which corresponds to $(0,1) \in N' \times \Z$, for the prime divisor $D \subset X$ corresponding to the facet $F=F' \times \{-1\}$ of $P$.

For example, for the singular del Pezzo surface in \autoref{subsec_surface},
the valuation $v$ is nothing but the divisorial valuation $\ord_E$,
where $E$ is the exceptional divisor of the blow-up of the ordinary double point in $X$. 

Recall that the function $g$ in \autoref{sec_ex} corresponding to the Socle filtration is not only concave but also affine,
contrary to the convex function $f$.
The affineness is due to \autoref{cor_valuation}.
\end{ex}

The author does not know whether or not the valuation $v$ is the divisorial valuation for some prime divisor over $X$ in general.

\end{document}